\newtheorem{thm}{Theorem}[section]
\newtheorem{lemma}[thm]{Lemma}
\theoremstyle{definition}
\newtheorem{remark}[thm]{Remark}
\newcommand{\varep}{\varepsilon}
\numberwithin{equation}{section}
\begin{document}

\bibliographystyle{amsplain}

\title{The Neumann Problem and Helmholtz Decomposition \\
in Convex Domains}

\author{Jun Geng and Zhongwei Shen\footnote{Supported in part by NSF grant DMS-0855294.}}

\date{ }
\maketitle
\begin{abstract}

We show that the Neumann problem for Laplace's equation
in a convex domain $\Omega$ with boundary data
in $L^p(\partial\Omega)$ is uniquely solvable for $1<p<\infty$.
As a consequence, we obtain the Helmholtz decomposition of vector fields
in $L^p(\Omega, \mathbb{R}^d)$.

\end{abstract}

\section{Introduction}

The main purpose of this paper is to prove the following.

\begin{thm}\label{Main-Theorem}
Let $\Omega$ be a bounded convex domain in $\mathbb{R}^d$, $d\ge 2$.
Let $1<p<\infty$.
Then the $L^p$ Neumann problem for
$\Delta u=0$ in $\Omega$ is uniquely solvable.
That is, given any $f\in L^p(\partial\Omega)$ with mean value zero,
there exists a harmonic function $u$ in $\Omega$, unique up to constants,
such that $(\nabla u)^*\in L^p(\partial\Omega)$
and $\frac{\partial u}{\partial n} =f$ n.t. on
$\partial\Omega$.
Moreover, the solution satisfies the estimate
$\|(\nabla u)^*\|_p \le C\| f\|_p$, where
$C$ depends only on $d$, $p$ and the Lipschitz character of $\Omega$.
\end{thm}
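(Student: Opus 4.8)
The plan is to treat the ranges $1<p\le 2$ and $2<p<\infty$ separately, since only the latter requires convexity. For $1<p\le 2$ the statement holds on an arbitrary bounded Lipschitz domain: the $L^2$ Neumann problem is solvable by the method of layer potentials, and the atomic $H^1$ estimate of Dahlberg--Kenig together with interpolation yields solvability and the bound $\|(\nabla u)^*\|_p\le C\|f\|_p$ for $1<p\le 2$. On a general Lipschitz domain this range is essentially sharp, so all the new content---and the only place convexity is used---is the range $2<p<\infty$.

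For $2<p<\infty$ I would invoke the real-variable method of Shen, which reduces the desired $L^p$ bound (and, in combination with the $L^2$ theory and a density argument, the solvability itself) to a single local \emph{reverse H\"older inequality} for the nontangential maximal function of gradients of solutions with vanishing Neumann data. Writing $\Sigma_r=B(x_0,r)\cap\partial\Omega$ for $x_0\in\partial\Omega$, it suffices to show that for each $q<\infty$ there is $C_q$ such that whenever $u$ is harmonic in $B(x_0,2r)\cap\Omega$ with $(\nabla u)^*\in L^2(\Sigma_{2r})$ and $\partial u/\partial n=0$ on $\Sigma_{2r}$, one has
\[
\left(\frac{1}{\sigma(\Sigma_r)}\int_{\Sigma_r}\big|(\nabla u)^*\big|^q\,d\sigma\right)^{1/q}\le C_q\left(\frac{1}{\sigma(\Sigma_{2r})}\int_{\Sigma_{2r}}\big|(\nabla u)^*\big|^2\,d\sigma\right)^{1/2}.
\]
Because this must hold for every finite $q$, the real target is an $L^\infty$-type bound: the supremum of $|\nabla u|$ over a neighborhood of $\Sigma_r$ in $\Omega$ controlled by the $L^2$ average of $(\nabla u)^*$ over $\Sigma_{2r}$.

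The heart of the proof, where convexity enters, is this reverse H\"older inequality. The key structural fact is that $|\nabla u|^2$ is subharmonic whenever $u$ is harmonic ($\Delta|\nabla u|^2=2|\nabla^2u|^2\ge 0$), while on a smooth convex boundary piece the Neumann condition forces $\nabla u$ to be tangential, and a direct computation then gives $\frac{\partial}{\partial n}|\nabla u|^2=-2\,\mathrm{II}(\nabla u,\nabla u)\le 0$, since the second fundamental form $\mathrm{II}$ of the boundary of a convex domain is nonnegative. Thus $|\nabla u|^2$ is a nonnegative subharmonic function whose outer normal derivative is nonpositive on $\Sigma_{2r}$. A maximum-principle (Hopf-lemma) argument then shows its maximum over $B(x_0,r)\cap\Omega$ cannot be attained on the boundary portion $\Sigma_{2r}$, so it is controlled by the values on the interior sphere $\partial B(x_0,2r)\cap\Omega$, which in turn are bounded via interior estimates by the $L^2$ average of $(\nabla u)^*$. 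This yields the reverse H\"older inequality with $q=\infty$. To make this rigorous on a domain whose boundary is merely Lipschitz, I would first establish the estimate on smooth convex domains, exhaust $\Omega$ by an increasing family of smooth bounded convex domains with uniformly controlled Lipschitz character, prove the estimate with a constant independent of the approximation, and pass to the limit.

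I expect the approximation step to be the main obstacle. One must produce the smoothing so that the approximating solutions and their nontangential maximal functions can be transferred back to $\Omega$, and one must keep the reverse H\"older constant from degenerating as the boundary is smoothed---in particular keeping control, with the correct sign, of the curvature term $\mathrm{II}$, which for the limiting domain is only a nonnegative measure. A secondary technical point is the careful passage between the pointwise bound on $|\nabla u|$ and the estimate for $(\nabla u)^*$, and the verification of the hypotheses of the real-variable machinery, namely the $L^2$ solvability and the existence and localization of solutions with vanishing Neumann data.
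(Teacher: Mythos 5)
Your overall architecture matches the paper's: split off $1<p\le 2$ as known Lipschitz theory, invoke the Kim--Shen real-variable criterion to reduce $p>2$ to a local reverse H\"older inequality for $(\nabla u)^*$, prove the key estimate on smooth convex domains, and pass to the limit. You have also correctly identified the precise point where convexity enters: $|\nabla u|^2$ is subharmonic, and on a smooth convex boundary portion where $\partial u/\partial n=0$ one has $\frac{\partial}{\partial n}|\nabla u|^2=-2\,\mathrm{II}(\nabla u,\nabla u)\le 0$. This is exactly the sign condition the paper exploits (the term $-\beta(\mathbf{v}_T;\mathbf{v}_T)\ge 0$ in Lemma 2.1). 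The gap is in how you propose to exploit it.

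The maximum-principle step does not close. The region you work on, $B(x_0,2r)\cap\Omega$, has boundary consisting of the surface piece $\Sigma_{2r}$, where you know $\partial_n|\nabla u|^2\le 0$, and the lateral piece $\partial B(x_0,2r)\cap\overline{\Omega}$, where you know nothing. The maximum principle plus Hopf's lemma only tells you that the maximum of $|\nabla u|^2$ over the \emph{whole} region is attained on the lateral piece; it does not localize to give a bound on $\sup_{B(x_0,r)\cap\Omega}|\nabla u|^2$ by quantities you control. Worse, the lateral piece touches $\partial\Omega$, so it contains points $x$ with $\delta(x)$ arbitrarily small; there the interior estimate gives only $|\nabla u(x)|\le C\delta(x)^{-d/2}\|\nabla u\|_{L^2(B(x,\delta(x)))}$, which is not controlled by the $L^2$ average of $(\nabla u)^*$ over $\Sigma_{2r}$ with a uniform constant. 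The natural fix is to cut off, i.e.\ to work with $\mathbf{v}=(\nabla u)\varphi$ so that the level sets $\{|\mathbf{v}|^2>t\}$ stay away from the lateral boundary --- but then $|\mathbf{v}|^2$ is no longer subharmonic and the pointwise maximum principle is lost. This is precisely why the paper abandons the pointwise argument in favor of an integrated, level-set version of the same sign condition: integration by parts against $\Psi(|\mathbf{v}|^2)$, the co-area formula, a Sobolev inequality, and a Moser-type iteration (Lemmas 2.1--2.3), which yield the interior reverse H\"older inequality in $L^q$ for every \emph{finite} $q$ with a constant depending on $q$. Note also that your reduction ``since this must hold for every finite $q$, the real target is an $L^\infty$-type bound'' is a logical overreach: the Kim--Shen criterion needs, for each fixed $p$, the boundary inequality with exponent $p$ only, and the paper obtains it from the finite-$q$ interior estimate via the square-function lemma and the bound $|\nabla^2 u(x)|\le C_q\, r^{d/q}\,[\delta(x)]^{-1-d/q}$ times the $L^2$ average, choosing $q=q(p)$ large. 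The $q=\infty$ statement you aim for is strictly stronger than what is proved (or needed), and no argument in the paper establishes it.
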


Here and thereafter $(\nabla u)^*$ denotes the nontangential maximal function of
$\nabla u$ and $n$ the unit outward normal to $\partial\Omega$.
By $\frac{\partial u}{\partial n}=f$ n.t. on $\partial\Omega$ we mean that
for a.e. $P\in \partial\Omega$,
$<\nabla u(x), n(P)>$ converges to $f(P)$ as $x\to P$ nontangentially.
We remark that for a harmonic function $u$ and $p\ge 2$, 
$(\nabla u)^*\in L^p(\partial\Omega)$ implies that $u$ is in the Sobolev space
$L^p_{1+\frac{1}{p}} (\Omega)$ \cite{JK-1995}.
The solutions in Theorem \ref{Main-Theorem}
satisfy the estimate 
$
 \|\nabla u\|_{L^p_{1/p}(\Omega)}
\le C \| f\|_p
$
for $2\le p<\infty$.

It is known that the $L^p$ Neumann problem 
for $\Delta u=0$ in a bounded $C^1$ domain
is uniquely solvable for any $p\in (1,\infty)$ \cite{FJR-1978}.
However, if $\Omega$ is a general Lipschitz domain, the sharp range of
$p$'s, for which the $L^p$ Neumann problem in $\Omega$ is solvable, is
$1<p<2+\varep$, where $\varep>0$ depends on
$\Omega$ (see \cite{JK-1981, Verchota-1984, Dahlberg-Kenig-1987};
also see \cite{Kenig-1994}
for references on related work on boundary value problems in Lipschitz domains). 
In \cite{Kim-Shen}, for any given Lipschitz domain $\Omega$ and $p>2$,
 Kim and Shen established a necessary and sufficient condition
for the solvability of the $L^p$ Neumann problem in $\Omega$.
More precisely, it is shown in \cite{Kim-Shen}
that the $L^p$ Neumann problem for $\Delta u=0$ in $\Omega$ is solvable
if and only if there exist positive constants $C_0$ and $r_0$ such that
for any $0<r<r_0$ and $Q\in \partial\Omega$, the
following weak reverse H\"older inequality on $\partial\Omega$,
\begin{equation}\label{reverse-Holder}
\left\{ \frac{1}{r^{d-1}}
\int_{B(Q,r)\cap \partial\Omega} |(\nabla v)^*|^p\, d\sigma\right\}^{1/p}
\le C_0 
\left\{ \frac{1}{r^{d-1}}
\int_{B(Q,2r)\cap\partial\Omega} |(\nabla v)^*|^2\, d\sigma\right\}^{1/2},
\end{equation}
holds for any harmonic function $v$ in $\Omega$
satisfying $(\nabla v)^*\in L^2(\partial\Omega)$
and $\frac{\partial v}{\partial n}=0$
 on $B(Q,3r)\cap \partial\Omega$ (see \cite[Theorem 1.1]{Kim-Shen}).
Using this condition,
Kim and Shen \cite{Kim-Shen} obtained the solvability of the $L^p$ Neumann problem
for $\Delta u=0$ in bounded convex domains in $\mathbb{R}^d$
for $1<p<\infty$ if $d=2$; for $1<p<4$ if $d=3$; and 
for $1<p<3+\varep$ if $d\ge 4$ (see \cite[Theorem 1.2]{Kim-Shen}).
Theorem \ref{Main-Theorem} extends the results in \cite{Kim-Shen}
in the case $d\ge 3$
and completely solves the $L^p$ Neumann problem for
Laplace's equation in convex domains.

Our approach to Theorem \ref{Main-Theorem} follows the proof of Theorem 1.2
in \cite{Kim-Shen}.
To establish the weak reverse H\"older inequality (\ref{reverse-Holder}),
we use the square function estimates for harmonic functions in Lipschitz domains
and the local $W^{2,2}$ estimate in convex domains.
This reduces the problem to the estimate of
\begin{equation}\label{key-term}
\sup_{x\in B(P,r)} |\nabla^2 v(x)|^{p-2} [\delta (x)]^{p-1-t},
\end{equation} 
where $t\in (0,1)$, $\delta(x)=\text{dist}(x,\partial\Omega)$,
and $v$ is a harmonic function in $\Omega$ such that $\frac{\partial v}{\partial n}=0$
on $B(P,3r)\cap \partial\Omega$ and $(\nabla v)^*\in L^2(\partial\Omega)$.
 In \cite{Kim-Shen} the authors used the interior estimates and
the local $W^{2,2}$ to obtain that for any $ x\in B(P,r)\cap \Omega$,
\begin{equation}\label{lower-dim}
|\nabla^2 v(x)| \le \frac{C}{r} \left[ \frac{r}{\delta (x)}\right]^{\frac{d}{2}}
\left\{ \frac{1}{r^d} \int_{B(P,3r)\cap \Omega}
|\nabla v(y)|^2\, dy\right\}^{1/2}.
\end{equation}
By a reflection argument the classical De Giorgi-Nash
estimate implies that for any $x\in B(P,r)\cap \Omega$,
\begin{equation}\label{high-dim}
|\nabla^2 v(x)| \le \frac{C}{r} \left[ \frac{r}{\delta (x)}\right]^{2-\alpha}
\left\{ \frac{1}{r^d} \int_{B(P,3r)\cap \Omega}
|\nabla v(y)|^2\, dy\right\}^{1/2},
\end{equation}
where $\alpha>0$ depends on $\Omega$.
Substituting (\ref{lower-dim}) for $d=2,3$ and  (\ref{high-dim}) for $d\ge 4$ into
(\ref{key-term}) and choose $t$ sufficiently close to $0$, we see that
the exponent of $\delta (x)$ would be positive for any $p>2$ if $d=2$; 
for $p<4$ if $d=3$; and for $p<3+\varep$
if $d\ge 4$. 
This leads to the restriction of $p$ for $d\ge 3$ in \cite[Theorem 1.2]{Kim-Shen}.
In this paper we will show that if $d\ge 3$, for any $x\in B(P,r)\cap \Omega$,
\begin{equation}\label{new-high-dim}
|\nabla^2 v(x)| \le \frac{C_\eta }{r} \left[ \frac{r}{\delta (x)}\right]^{1+\eta}
\left\{ \frac{1}{r^d} \int_{B(P,3r)\cap \Omega}
|\nabla v(y)|^2\, dy\right\}^{1/2}
\end{equation}
for any $\eta>0$. Substituting (\ref{new-high-dim}) into (\ref{key-term}), we see that
the exponent of $\delta(x)$ is
$-\eta (p-2) +1-t$, which would be positive for any $p>2$ if $\eta>0$ is sufficiently small.

To show (\ref{new-high-dim}), we will prove that if $\Omega$ is a convex domain
with smooth boundary,
then for any $q>2$,
\begin{equation}\label{reverse-1}
\left\{ \frac{1}{r^d}
\int_{B(Q,r)\cap \Omega}
|\nabla v|^q\, dx\right\}^{1/q}
\le C 
\left\{ \frac{1}{r^d}
\int_{B(Q,2r)\cap \Omega}
|\nabla v|^2\, dx\right\}^{1/2},
\end{equation}
whenever $v$ is harmonic in $\Omega$ and
$v\in C^2(\overline{\Omega})$, $\frac{\partial v}{\partial n}
=0$ in $B(Q,3r)\cap \partial\Omega$.
The constant $C$ in (\ref{reverse-1}) depends only on
$d$, $q$ and the Lipschitz character of $\Omega$. 
Our proof of (\ref{reverse-1}) is 
inspired by a recent paper of V. Maz'ya \cite{Mazya-2008}, in which he
established the $L^\infty$ gradient estimate for
solutions of the Neumann-Laplace problem in convex domains.
More precisely, it is proved in \cite{Mazya-2008} that if $q>d$ and $f\in L^q(\Omega)$
with mean value zero, then
$
\|\nabla u\|_{L^\infty(\Omega)} \le C \| f\|_{L^q(\Omega)},
$
where $-\Delta u=f$ in $\Omega$ and $\frac{\partial u}{\partial n}=0$
on $\partial\Omega$.
Although the proof of (\ref{reverse-1}) does not rely on this estimate,
the formulation of our main technical lemma, Lemma \ref{lemma-2.1}, as well as its proof,
is motivated by \cite{Mazya-2008}.

With Theorem \ref{Main-Theorem} at our disposal, following the potential approach
developed by Fabes, Mendez, Mitrea \cite{FMM-1998}
in Lipschitz domains, we may study the solvability of the Poisson
equation with Neumann boundary conditions in convex domains.
In particular, consider the boundary value problem
\begin{equation}\label{Poisson-problem}
\left\{
\aligned 
& \Delta u=f \in L^p_{-1,0}(\Omega),\\
&\frac{\partial u}{\partial n}=g \in B^p_{-1/p}(\partial\Omega),\\
& u\in W^{1,p}(\Omega).
\endaligned
\right.
\end{equation}
Here $L^p_{-1,0}(\Omega)$ is the dual of
$L^q_1(\Omega)=W^{1,q}(\Omega)$ and $B^p_{-1/p}(\partial\Omega)$ the dual of 
the Besov space $B^q_{1/p}(\partial\Omega)$ 
on $\partial\Omega$, where $q=\frac{p}{p-1}$.
 We will call $u\in W^{1,p}(\Omega)$ a solution to (\ref{Poisson-problem})
with data $(f,g)$, if 
\begin{equation}\label{weak-solution}
\int_\Omega \nabla u \cdot\nabla\phi\, dx
=-<f, \phi>_{L^p_{-1,0}(\Omega)\times L^q_1(\Omega)}
 +<g, Tr(\phi)>_{B^p_{-1/p}(\partial\Omega)\times B^q_{1/p}(\partial\Omega)}
\end{equation}
for any $\phi\in W^{1,q}(\Omega)$, where $Tr(\phi)$ denotes the trace of $\phi$ on $\partial\Omega$.

\begin{thm}\label{cor-1}
Let $\Omega$ be a bounded convex domain in $\mathbb{R}^d$, $d\ge 2$.
Let $1<p<\infty$.
Then for any $f\in L^p_{-1,0}(\Omega)$ and $g\in B^p_{-1/p}(\partial\Omega)$ satisfying the
compatibility condition $<f,1>=<g,1>$, the Poisson problem (\ref{Poisson-problem})
has a unique (up to constants) solution $u$.
Moreover, the solution $u$ satisfies the estimate
\begin{equation}
\|\nabla u\|_{L^p(\Omega)}
\le C \left\{ \| f\|_{L^p_{-1,0}(\Omega)}
+\| g\|_{B^p_{-1/p}(\partial\Omega)}\right\},
\end{equation}
where $C$ depends only on $d$, $p$ and the Lipschitz character of $\Omega$.
\end{thm}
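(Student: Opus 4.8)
The plan is to reduce Theorem \ref{cor-1} to the harmonic Neumann problem already settled in Theorem \ref{Main-Theorem}, via the layer-potential scheme of Fabes–Mendez–Mitrea \cite{FMM-1998}; the point is that the full range $1<p<\infty$ furnished by Theorem \ref{Main-Theorem} is exactly what allows that scheme to run without any restriction on $p$ or $d$.

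\textbf{Step 1 (Removing the interior datum).} Since $L^p_{-1,0}(\Omega)$ is the dual of $W^{1,q}(\Omega)$, any $f$ in it can be written as $f=f_0+\operatorname{div}\vec F$ with $f_0\in L^p(\Omega)$, $\vec F\in L^p(\Omega,\mathbb{R}^d)$, and $\|f_0\|_{L^p}+\|\vec F\|_{L^p}\le C\|f\|_{L^p_{-1,0}}$. Extending $(f_0,\vec F)$ to $\mathbb{R}^d$ and convolving with the Newtonian potential produces $w$ with $\Delta w=f$ in $\Omega$ and $\|\nabla w\|_{L^p(\Omega)}\le C\|f\|_{L^p_{-1,0}}$; the weak Green formula shows that $\partial w/\partial n$ is a well-defined element of $B^p_{-1/p}(\partial\Omega)$ with norm controlled by $\|f\|_{L^p_{-1,0}}$. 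Setting $v=u-w$ reduces the problem to finding a harmonic $v\in W^{1,p}(\Omega)$ with
\[
\frac{\partial v}{\partial n}=\tilde g:=g-\frac{\partial w}{\partial n}\in B^p_{-1/p}(\partial\Omega).
\]
Moreover $\langle\partial w/\partial n,1\rangle=\langle\Delta w,1\rangle=\langle f,1\rangle$, so the compatibility $\langle f,1\rangle=\langle g,1\rangle$ yields $\langle\tilde g,1\rangle=0$.

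\textbf{Step 2 (Single layer potential).} Let $\mathcal S h(x)=\int_{\partial\Omega}\Gamma(x-y)h(y)\,d\sigma(y)$, where $\Gamma$ is the fundamental solution of $-\Delta$. I seek $v=\mathcal S h$: then $\Delta v=0$ in $\Omega$, and the jump relation for the normal derivative gives $\partial v/\partial n=(-\tfrac12 I+K^*)h$ on $\partial\Omega$, so the problem becomes the boundary equation $(-\tfrac12 I+K^*)h=\tilde g$ in $B^p_{-1/p}(\partial\Omega)$. Two facts about $\mathcal S$ and $K^*$ on the convex (hence Lipschitz) boundary are needed: (a) the mapping property $\mathcal S\colon B^p_{-1/p}(\partial\Omega)\to W^{1,p}(\Omega)$ is bounded, with $\|\nabla\mathcal S h\|_{L^p(\Omega)}\le C\|h\|_{B^p_{-1/p}(\partial\Omega)}$; and (b) $-\tfrac12 I+K^*$ is an isomorphism of the mean-zero subspace of $B^p_{-1/p}(\partial\Omega)$ onto itself. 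Granting (a) and (b), I take $h=(-\tfrac12 I+K^*)^{-1}\tilde g$, set $v=\mathcal S h$ and $u=v+w$, and obtain both existence and the estimate $\|\nabla u\|_{L^p}\le C(\|f\|_{L^p_{-1,0}}+\|g\|_{B^p_{-1/p}})$.

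\textbf{Step 3 (Invertibility on the Besov scale) and main obstacle.} This is the crux. Theorem \ref{Main-Theorem}, through the method of layer potentials (Verchota's Rellich-identity argument and its $L^p$ version in \cite{Kim-Shen}), yields invertibility of $-\tfrac12 I+K^*$ on the mean-zero subspace of $L^p(\partial\Omega)$ for every $1<p<\infty$; dually, the regularity problem is solvable for every $1<p<\infty$, giving invertibility of $-\tfrac12 I+K$ on the Sobolev-type space $L^p_1(\partial\Omega)/\mathbb{R}$. Real interpolation between the $L^p$ statement and the negative-smoothness endpoint then propagates invertibility to the whole open square $\{(s,1/p):-1<s<0,\ 0<1/p<1\}$ of Besov spaces $B^p_s(\partial\Omega)$, and in particular to $s=-1/p$, which is dual to $B^q_{1/p}(\partial\Omega)$ as in the statement. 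It is precisely here that the earlier dimension- and $p$-restrictions vanish: because Theorem \ref{Main-Theorem} supplies the endpoints for \emph{all} $p$, the interpolation covers the entire range rather than the truncated one of \cite{Kim-Shen}. The difficulty of this step, and of the whole proof, lies in this transfer—establishing the Sobolev–Besov mapping theory of layer potentials on a Lipschitz boundary, the full Neumann/regularity duality in the convex setting, and the careful bookkeeping of mean-value-zero (resp. modulo-constants) subspaces so that the interpolation and the compatibility condition line up. Finally, uniqueness up to constants follows from the injectivity in (b): if $f=g=0$ then $u$ is harmonic with $\partial u/\partial n=0$, represented as $u=\mathcal S h$ with $(-\tfrac12 I+K^*)h=0$, forcing $h=0$ and hence $u$ constant. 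Steps 1, 2 and the uniqueness argument are then routine once these mapping and invertibility properties are in hand.
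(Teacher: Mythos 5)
Your Step 1 is exactly the paper's reduction (the paper takes $w=\Pi_\Omega(f)$ and defines the induced boundary datum $\Lambda(f)$ precisely as you do), but Step 3 --- which you correctly identify as the crux --- contains a genuine gap. To interpolate invertibility of $-\frac12 I+K^*$ across the whole strip $-1<s<0$ you need, besides the $s=0$ endpoint, the $s=-1$ endpoint, i.e.\ invertibility on $B^p_{-1}(\partial\Omega)$ for all $1<p<\infty$, which is dual to the regularity problem with data in $L^{q}_1(\partial\Omega)$. Your justification --- ``dually, the regularity problem is solvable for every $1<p<\infty$'' --- is not a valid deduction: the adjoint of $-\frac12 I+K^*$ acting on the mean-zero subspace of $L^p(\partial\Omega)$ is $-\frac12 I+K$ acting on $L^{q}(\partial\Omega)$ modulo constants, which governs the $L^{q}$ \emph{Dirichlet} problem, not the operator on the boundary Sobolev space $L^{q}_1(\partial\Omega)$ that governs the regularity problem. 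The full-range regularity problem in convex domains is neither proved in this paper nor a formal consequence of Theorem \ref{Main-Theorem}, so your interpolation cannot reach the whole square. Even your $s=0$ endpoint is not free: passing from the nontangential-maximal-function estimate of Theorem \ref{Main-Theorem} to $\|h\|_{L^p}\le C\|(-\frac12 I+K^*)h\|_{L^p}$ uses the jump relations and hence requires control of the \emph{exterior} Neumann problem in $L^p$; the exterior of a convex domain is merely Lipschitz, where that problem is solvable only for $p<2+\varep$.

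The paper's route shows how to avoid exactly this obstruction. It interpolates the Fabes--Mendez--Mitrea solvability polygon $\mathcal{P}$ for (\ref{B-Neumann-problem}) (valid in any Lipschitz domain, \cite{FMM-1998}) with the single new edge $s=0$, $0<1/p<1$, furnished by Theorem \ref{Main-Theorem}; the enlarged polygon $\mathcal{P}_1$ contains the diagonal point $(1/p,1/p)$ for $2\le p<\infty$, which proves Theorem \ref{cor-1} in that range. The remaining range $1<p<2$ is then obtained not by interpolation but by a functional-analytic duality: Theorem \ref{equiv-theorem} shows that solvability of (\ref{B-Neumann-problem}) at $s=1/p$ is equivalent to the $L^p$ Helmholtz decomposition, Lemma \ref{lemma-6.2} shows the latter is self-dual under $p\leftrightarrow p/(p-1)$, and Theorem \ref{equiv-theorem-1} transfers solvability from $p\ge 2$ to $1<p<2$. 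To repair your argument you must either prove the $B^p_{-1}$ (regularity) endpoint for all $p$ in convex domains, or replace the lower half of your interpolation square by a duality step of this kind. (A smaller issue: your uniqueness argument presumes that every harmonic $u\in W^{1,p}(\Omega)$ with vanishing Neumann data is representable as a single layer potential, which itself requires proof; the paper instead derives uniqueness from $L^p_\sigma(\Omega)\cap \mathrm{grad}\, W^{1,p}(\Omega)=\{0\}$.)
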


We remark that for bounded Lipschitz or $C^1$ domains, the inhomogeneous Neumann problem,
$\Delta u=f\in L^p_{1/p-s-1,0}(\Omega)$ in $\Omega$, $\frac{\partial u}{\partial n}
=g\in B_{-s}^p(\partial\Omega)$ and $u\in L^p_{1-s+1/p}(\Omega)$
with $s\in (0,1)$ and $p\in (1, \infty)$,
 was studied in \cite{FMM-1998}, where the authors obtained
the solvability for the sharp ranges of
$p$ and $s$. Analogous results 
for the inhomogeneous Dirichlet problem in Lipschitz or $C^1$ domains
may be found in \cite{JK-1995}.
In particular, it follows from \cite{FMM-1998} that
the boundary value problem (\ref{Poisson-problem}) is solvable for
$p\in ((3/2)-\varep, 3+\varep)$ if $\Omega$ is Lipschitz; and for $p\in (1, \infty)$
if $\Omega$ is $C^1$.

Let $L^p_\sigma (\Omega)$ denote the subspace of
functions $\mathbf{v}$ in $L^p (\Omega, \mathbb{R}^d)$ such that
$
\int_\Omega \mathbf{v}\cdot \nabla \phi dx=0$
for any $\phi\in C^1(\mathbb{R}^d)$.
As a corollary of Theorem \ref{cor-1}, we establish the Helmholtz decomposition
of $L^p$ vector fields on convex domains for $1<p<\infty$.

\begin{thm}\label{cor-2}
Let $\Omega$ be a bounded convex domain in $\mathbb{R}^d$, $d\ge 2$ and  $1<p<\infty$. Then
\begin{equation}\label{H-decomposition}
L^p(\Omega, \mathbb{R}^d) =\text{grad } W^{1,p}(\Omega) \oplus L^p_\sigma (\Omega).
\end{equation}
That is, given any $\mathbf{u}\in L^p(\Omega, \mathbb{R}^d)$, there exist 
$\phi \in W^{1,p}(\Omega)$, unique up to a constant, and a unique
$\mathbf{v}\in L^p_\sigma (\Omega)$
such that 
$
\mathbf{u}=\nabla \phi + \mathbf{v}.
$
Moreover, 
\begin{equation}\label{H-estimate}
\max \big\{ \| \nabla \phi\|_{L^p(\Omega)},
\| \mathbf{v}\|_{L^p(\Omega)} \big\} 
\le C_p \, \| \mathbf{u}\|_{L^p(\Omega)},
\end{equation}
where $C_p$ depends only on $d$, $p$ and the Lipschitz character of $\Omega$.
\end{thm}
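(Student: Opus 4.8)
The plan is to recognize that the Helmholtz decomposition (\ref{H-decomposition}) is essentially a restatement of the solvability of the Neumann problem established in Theorem \ref{cor-1}. Given $\mathbf{u}\in L^p(\Omega,\mathbb{R}^d)$, a decomposition $\mathbf{u}=\nabla\phi+\mathbf{v}$ with $\mathbf{v}\in L^p_\sigma(\Omega)$ requires precisely that $\int_\Omega(\mathbf{u}-\nabla\phi)\cdot\nabla\psi\,dx=0$ for all admissible $\psi$; equivalently,
\[
\int_\Omega\nabla\phi\cdot\nabla\psi\,dx=\int_\Omega\mathbf{u}\cdot\nabla\psi\,dx
\]
for all $\psi\in W^{1,q}(\Omega)$, where $q=p/(p-1)$. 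This is exactly the weak formulation (\ref{weak-solution}) of the Poisson--Neumann problem (\ref{Poisson-problem}) with boundary data $g=0$ and interior data $f$ given by the functional $\langle f,\psi\rangle=-\int_\Omega\mathbf{u}\cdot\nabla\psi\,dx$. So the whole theorem should follow by feeding this $f$ into Theorem \ref{cor-1}.

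For existence I would first check that $f$, so defined, is a legitimate element of $L^p_{-1,0}(\Omega)=(W^{1,q}(\Omega))^*$: by H\"older, $|\langle f,\psi\rangle|\le\|\mathbf{u}\|_{L^p(\Omega)}\|\nabla\psi\|_{L^q(\Omega)}\le\|\mathbf{u}\|_{L^p(\Omega)}\|\psi\|_{W^{1,q}(\Omega)}$, so $\|f\|_{L^p_{-1,0}(\Omega)}\le\|\mathbf{u}\|_{L^p(\Omega)}$. The compatibility condition $\langle f,1\rangle=\langle g,1\rangle$ holds trivially since $\nabla 1=0$ forces $\langle f,1\rangle=0=\langle g,1\rangle$. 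Theorem \ref{cor-1} then yields $\phi\in W^{1,p}(\Omega)$, unique up to a constant, solving (\ref{Poisson-problem}) with the stated data, and $\|\nabla\phi\|_{L^p(\Omega)}\le C\|f\|_{L^p_{-1,0}(\Omega)}\le C\|\mathbf{u}\|_{L^p(\Omega)}$. Setting $\mathbf{v}=\mathbf{u}-\nabla\phi$, the weak formulation (\ref{weak-solution}) gives $\int_\Omega\mathbf{v}\cdot\nabla\psi\,dx=0$ for every $\psi\in W^{1,q}(\Omega)$, in particular for every $\psi\in C^1(\mathbb{R}^d)$, so $\mathbf{v}\in L^p_\sigma(\Omega)$; the bound $\|\mathbf{v}\|_{L^p(\Omega)}\le\|\mathbf{u}\|_{L^p(\Omega)}+\|\nabla\phi\|_{L^p(\Omega)}\le C\|\mathbf{u}\|_{L^p(\Omega)}$ then gives (\ref{H-estimate}).

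For uniqueness it suffices to show the sum is direct, i.e. $\text{grad }W^{1,p}(\Omega)\cap L^p_\sigma(\Omega)=\{0\}$. If $\mathbf{w}=\nabla\phi$ with $\phi\in W^{1,p}(\Omega)$ lies in $L^p_\sigma(\Omega)$, then $\int_\Omega\nabla\phi\cdot\nabla\psi\,dx=0$ for all $\psi\in C^1(\mathbb{R}^d)$, hence for all $\psi\in W^{1,q}(\Omega)$ by the density of (restrictions of) $C^1(\mathbb{R}^d)$ in $W^{1,q}(\Omega)$ on the Lipschitz domain $\Omega$. Thus $\phi$ solves the homogeneous problem (\ref{Poisson-problem}) with $f=0$, $g=0$, so by the uniqueness assertion of Theorem \ref{cor-1} $\phi$ is constant and $\mathbf{w}=0$. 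Given two decompositions of $\mathbf{u}$, their difference $\nabla(\phi_1-\phi_2)=\mathbf{v}_2-\mathbf{v}_1$ lies in this intersection and hence vanishes, which settles uniqueness of both components.

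The only genuinely nontrivial input is Theorem \ref{cor-1} itself; once it is available, the argument is soft. The point that requires a little care is the density step used in the uniqueness argument --- namely that the defining condition of $L^p_\sigma(\Omega)$, stated only for test functions in $C^1(\mathbb{R}^d)$, propagates to all of $W^{1,q}(\Omega)$ --- together with the clean identification of the functional $\psi\mapsto-\int_\Omega\mathbf{u}\cdot\nabla\psi\,dx$ as an element of $L^p_{-1,0}(\Omega)$ of norm at most $\|\mathbf{u}\|_{L^p(\Omega)}$.
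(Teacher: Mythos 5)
Your proof is correct, and it reaches the decomposition by a more streamlined route than the paper. The paper derives Theorem \ref{cor-2} from Theorem \ref{cor-1} together with Theorem \ref{equiv-theorem}, whose forward direction builds $\phi$ in two explicit steps: first a Newtonian-potential term $\frac{\partial}{\partial x_i}\int_\Omega \Gamma(x-y)u_i(y)\,dy$ controlled by the Calder\'on--Zygmund estimate, then a harmonic correction $\psi$ solving the Besov--Neumann problem (\ref{B-Neumann-problem}) with boundary datum $\Lambda=(\mathbf{u}-\nabla\phi)\cdot n\in B^p_{-1/p}(\partial\Omega)$. You instead feed the single distribution $\langle f,\psi\rangle=-\int_\Omega\mathbf{u}\cdot\nabla\psi\,dx$, which is manifestly in $L^p_{-1,0}(\Omega)$ with norm at most $\|\mathbf{u}\|_{L^p(\Omega)}$ and trivially compatible, into Theorem \ref{cor-1} with $g=0$; the weak formulation (\ref{weak-solution}) then hands you the orthogonality of $\mathbf{v}=\mathbf{u}-\nabla\phi$ against all of $W^{1,q}(\Omega)$ in one stroke. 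This is legitimate because Theorem \ref{cor-1} is stated for arbitrary $f\in L^p_{-1,0}(\Omega)$, and the two steps of the paper's construction are in effect already packaged inside its proof. What the paper's detour through Theorem \ref{equiv-theorem} buys is the two-way equivalence between the Helmholtz decomposition and the solvability of (\ref{B-Neumann-problem}) at $s=1/p$, which (via Lemma \ref{lemma-6.2} and Theorem \ref{equiv-theorem-1}) is also what completes the proof of Theorem \ref{cor-1} in the range $1<p<2$; your argument does not need that equivalence once Theorem \ref{cor-1} is taken as established for all $1<p<\infty$, so there is no circularity. The two points you flag as needing care are handled correctly: the passage from test functions in $C^1(\mathbb{R}^d)$ to all of $W^{1,q}(\Omega)$ is exactly the identification of $L^p_\sigma(\Omega)$ the paper itself records at the start of Section 6, and your uniqueness argument via $\text{grad }W^{1,p}(\Omega)\cap L^p_\sigma(\Omega)=\{0\}$ mirrors the paper's.
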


A useful tool in the study of the Navier-Stokes
equations, 
the Helmholtz decomposition (\ref{H-decomposition})
is well known for smooth domains (see e.g. \cite{FuMo}).
It was proved in \cite{FMM-1998} that
(\ref{H-decomposition})-(\ref{H-estimate}) hold for $p\in ((3/2)-\varep, 3+\varep)$
if $\Omega$ is Lipschitz; and for $p\in (1,\infty)$ if $\Omega$ is $C^1$.
The range $(3/2)-\varep<p<3 +\varep$ is known to be sharp for Lipschitz domains
\cite{FMM-1998}.

\section{Estimates on smooth convex domains}

The purpose of this section is to establish the following.

\begin{thm}\label{Theorem-2.1}
Let $\Omega$ be a bounded convex domain in $\mathbb{R}^d$, $d\ge 3$
with $C^2$ boundary.
Let $u\in C^3(\overline{\Omega})$.
Suppose that $\Delta u=0$ in $\Omega$ and $\frac{\partial u}{\partial n}=0$
on $B(Q,3r)\cap \partial\Omega$ for some $Q\in \partial\Omega$ and $0<r<r_0$.
Then for any $q>2$,
\begin{equation}\label{estimate-2.1}
\left\{ \frac{1}{r^{d}}
\int_{B(Q,r)\cap \Omega}
|\nabla u|^q\, dx\right\}^{1/q}
\le C
\left\{ \frac{1}{r^{d}}
\int_{B(Q,2r)\cap \Omega}
|\nabla u|^2\, dx\right\}^{1/2},
\end{equation}
where $C$ depends only on $d$, $q$ and the Lipschitz character of $\Omega$.
\end{thm}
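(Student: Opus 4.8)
The plan is to exploit the two structural consequences of convexity for the auxiliary function $v=|\nabla u|^2$: it is subharmonic inside $\Omega$, and its outward normal derivative is nonpositive on the Neumann portion of the boundary. Since $u$ is harmonic, $\Delta v=2|\nabla^2 u|^2\ge 0$, so $v$ is subharmonic in $\Omega$. On $B(Q,3r)\cap\partial\Omega$, where $\partial u/\partial n\equiv 0$, I would invoke the classical boundary identity
\[ \frac{\partial}{\partial n}\Big(\tfrac12|\nabla u|^2\Big)=-\mathrm{II}(\nabla_T u,\nabla_T u), \]
where $\nabla_T u$ denotes the tangential gradient and $\mathrm{II}$ the second fundamental form of $\partial\Omega$; this is precisely where the hypotheses $u\in C^3(\overline{\Omega})$ and $C^2$ boundary are used. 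Because $\Omega$ is convex, $\mathrm{II}\ge 0$, whence $\partial v/\partial n\le 0$ on $B(Q,3r)\cap\partial\Omega$. This sign is the only place convexity enters the argument.

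Next I would run a boundary version of Moser's iteration for the nonnegative subsolution $v$. For each $\beta\ge 1$ the power $w=v^\beta$ is again a subsolution, $\Delta w\ge\beta(\beta-1)v^{\beta-2}|\nabla v|^2\ge 0$, with $\partial w/\partial n=\beta v^{\beta-1}\,\partial v/\partial n\le 0$. Testing $\Delta w\ge 0$ against $\eta^2 w$, where $\eta$ is a cutoff supported in $B(Q,3r)$ that need not vanish on $\partial\Omega$, the boundary contribution $\int_{\partial\Omega}\eta^2 w\,\partial w/\partial n\,d\sigma$ has the favorable sign and may be discarded; the usual absorption then yields the Caccioppoli inequality $\int_\Omega\eta^2|\nabla w|^2\,dx\le C\int_\Omega w^2|\nabla\eta|^2\,dx$. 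Combining this with the Sobolev inequality on $\Omega$ and iterating over $\beta$ produces the local boundedness (sub-mean-value) estimate
\[ \sup_{B(Q,r)\cap\Omega} v\le \frac{C}{r^d}\int_{B(Q,2r)\cap\Omega} v\,dx, \]
using that local boundedness for subsolutions holds for every positive exponent, so the average on the right may be taken in $L^1$.

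Finally, (\ref{estimate-2.1}) follows by interpolation. Writing $M=\sup_{B(Q,r)\cap\Omega}v$ and using $B(Q,r)\subset B(Q,2r)$, for any $q>2$ one has
\[ \frac{1}{r^d}\int_{B(Q,r)\cap\Omega} v^{q/2}\,dx\le M^{\,q/2-1}\,\frac{1}{r^d}\int_{B(Q,r)\cap\Omega} v\,dx\le C\Big(\frac{1}{r^d}\int_{B(Q,2r)\cap\Omega} v\,dx\Big)^{q/2}, \]
by the bound from the previous step. Raising to the power $1/q$ and recalling $v=|\nabla u|^2$ gives exactly (\ref{estimate-2.1}), with $C$ depending only on $d$, $q$ and the Lipschitz character.

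The geometric identity and the interpolation are clean, and the algebra of the Caccioppoli step is routine; the delicate point I expect to be the main obstacle is carrying out the Moser iteration uniformly up to the curved Neumann boundary with all constants depending only on $d$, $q$ and the Lipschitz character. The functions $v^\beta$ do not vanish on $\partial\Omega$, so the relevant Sobolev inequality is the one for $W^{1,2}(\Omega)$ rather than $W^{1,2}_0(\Omega)$; obtaining it with the correct scaling and a constant controlled by the Lipschitz character requires flattening $B(Q,3r)\cap\partial\Omega$ by a bi-Lipschitz change of variables (note that convexity has already been consumed in the boundary sign and is not needed here) and tracking the dependence of all constants through the iteration.
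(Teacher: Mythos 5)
Your argument is correct, but it takes a genuinely different route from the paper. The convexity input is the same in both: your boundary identity $\tfrac12\partial_n|\nabla u|^2=\nabla^2u(\nabla u,n)=-\mathrm{II}(\nabla_T u,\nabla_T u)\le 0$ on the portion where $\partial u/\partial n=0$ is exactly the Grisvard inequality $v_in_i\,\mathrm{div}(\mathbf{v})-v_in_j\partial v_j/\partial x_i=-\beta(\mathbf{v}_T,\mathbf{v}_T)\ge 0$ invoked in the proof of Lemma \ref{lemma-2.1}, specialized to $\mathbf{v}=\nabla u$. The difference is where the localization happens. The paper cuts off \emph{inside the vector field}, $\mathbf{v}=(\nabla u)\varphi$, which destroys the curl-free/divergence-free structure (so $|\mathbf{v}|^2$ is no longer subharmonic) and forces the Maz'ya-style level-set inequality of Lemmas \ref{lemma-2.1}--\ref{lemma-2.2}, converted by Lemma \ref{lemma-2.3} into a single Sobolev-improvement step $\frac1q=\frac1p-\frac2d$ that is then iterated finitely many times to reach a given $q<\infty$ (with constant depending on $q$). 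You instead keep $\nabla u$ intact, so that $v=|\nabla u|^2$ is genuinely subharmonic with the favorable Neumann sign, and you localize only in the test function, running a boundary Moser iteration; this is more elementary and in fact yields the endpoint $\sup_{B(Q,r)\cap\Omega}|\nabla u|^2\le Cr^{-d}\int_{B(Q,2r)\cap\Omega}|\nabla u|^2$ with $C$ independent of $q$ --- strictly stronger than \eqref{estimate-2.1} and consistent with Maz'ya's $L^\infty$ gradient bound cited in the introduction (Maz'ya's level-set machinery, which the paper imports, is really needed for the inhomogeneous equation, where $|\nabla u|^2$ is not subharmonic; the cutoff inside the field creates an artificial inhomogeneity $\mathrm{div}(\mathbf{v})=\nabla u\cdot\nabla\varphi$). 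The only point requiring care, which you correctly flag, is the Sobolev inequality on $\Omega\cap B(Q,\rho)$ for functions not vanishing on $\partial\Omega$ with a constant controlled by the Lipschitz character; since $\Omega\cap B(Q,\rho)$ is itself convex with eccentricity controlled by $M$, this is exactly the inequality \eqref{Sobolev-Poincare-inequality} already used in the paper, so no bi-Lipschitz flattening is needed. With that observation your proof closes, and the accumulated constant $\prod_k(C4^k)^{\chi^{-k}}$ in the iteration converges, so all dependencies are as claimed.
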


The summation convention will be used in this section.

The proof of Theorem \ref{Theorem-2.1} relies on the following lemma.
As we mentioned in Introduction, the formulation of Lemma \ref{lemma-2.1}
as well as its proof is inspired
by a paper of Maz'ya \cite{Mazya-2008}.

\begin{lemma}\label{lemma-2.1}
Let $\Omega$ be a bounded convex domain with $C^2$ boundary.
Suppose that $\mathbf{v}=(v_1,\dots, v_d) \in C^2(\overline{\Omega}, \mathbb{R}^d)$ 
and $\mathbf{v}\cdot n =0$ on $\partial \Omega$. 
Let $g=|\mathbf{v}|^2$. Then 
for a.e. $t\in (0,\infty)$,
\begin{equation}\label{estimate-2.1.1}
\aligned
\int_{\{ g=t\}} |\nabla g |\, d\sigma & 
\le  2\sqrt{t}\int_{\{ g=t\}}\left\{ 
\left(\sum_{i,j} \big|\frac{\partial v_i}{\partial x_j}- \frac{\partial v_j}
{\partial x_i}\big|^2\right)^{1/2}+|\text{\rm div} (\mathbf{v})|\right\}d\sigma
\\
&\qquad\qquad
+2 \int_{\{g> t\}}
\left\{ |\text{\rm div}(\mathbf{v})|^2
- \frac{\partial v_i}{\partial x_j} \cdot \frac{\partial v_j}{\partial x_i}\right\} \, dx,
\endaligned
\end{equation}
where $\sigma=H^{d-1}$ denotes the $d-1$ dimensional Hausdorff measure and
$\{ g=t\} =\{ x\in \Omega: g(x)=t\}$, $\{ g>t\} =\{ x\in \Omega: g(x)>t\}$.

\end{lemma}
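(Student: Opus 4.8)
The plan is to prove the integral identity by the coarea formula, combined with an application of the divergence theorem on the superlevel set $\{g>t\}$. First I would recall the coarea formula, which for a.e. regular value $t$ expresses $\int_{\{g=t\}} |\nabla g|\, d\sigma$ in a form that lets us relate the surface integral of $|\nabla g|$ to a volume integral of $\Delta g$ over $\{g>t\}$. Concretely, on the smooth level set $\{g=t\}$ the outward unit normal (pointing in the direction of decreasing $g$, i.e.\ out of $\{g>t\}$) is $-\nabla g/|\nabla g|$, so $\int_{\{g=t\}}|\nabla g|\,d\sigma = -\int_{\partial(\{g>t\})\cap\Omega}\frac{\partial g}{\partial \nu}\, d\sigma$ where $\nu$ is the outward normal to the region $\{g>t\}$. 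The boundary of $\{g>t\}$ inside $\overline{\Omega}$ splits into the interior piece $\{g=t\}$ and a boundary piece lying on $\partial\Omega$, so applying the divergence theorem to $\nabla g$ over $\{g>t\}$ gives
\begin{equation}\label{plan-green}
\int_{\{g=t\}}|\nabla g|\, d\sigma
= \int_{\{g>t\}} \Delta g\, dx
- \int_{\{g>t\}\cap\partial\Omega} \frac{\partial g}{\partial n}\, d\sigma,
\end{equation}
where $n$ is the outward normal to $\partial\Omega$. This is the structural backbone; the rest is to rewrite $\Delta g$ and the boundary term $\partial g/\partial n$ in terms of the first derivatives of $\mathbf{v}$.

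Next I would compute $\Delta g$ pointwise. Since $g=|\mathbf{v}|^2=v_i v_i$, we have $\partial_j g = 2 v_i \partial_j v_i$ and $\Delta g = 2\,\partial_j v_i\,\partial_j v_i + 2\, v_i \Delta v_i$. The plan is not to use harmonicity (the lemma does not assume it), but to algebraically rewrite the gradient-squared term. The key identity is
\begin{equation}\label{plan-id}
\partial_j v_i\,\partial_j v_i
= \tfrac{1}{2}\sum_{i,j}\big|\partial_j v_i - \partial_i v_j\big|^2
+ \partial_i v_j\,\partial_j v_i,
\end{equation}
which is just expanding the square and using that $\partial_j v_i\,\partial_i v_j$ is symmetric in the paired indices. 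I would then handle $\partial_i v_j\,\partial_j v_i$ via a divergence rearrangement: $\partial_i v_j\,\partial_j v_i = \partial_j(v_i\,\partial_i v_j) - v_i\,\partial_i(\partial_j v_j) = \partial_j(v_i\,\partial_i v_j) - v_i\,\partial_i(\mathrm{div}\,\mathbf{v})$, and similarly $(\mathrm{div}\,\mathbf{v})^2 = \partial_i(v_i\,\mathrm{div}\,\mathbf{v}) - v_i\,\partial_i(\mathrm{div}\,\mathbf{v})$, so that $(\mathrm{div}\,\mathbf{v})^2 - \partial_i v_j\,\partial_j v_i$ equals a pure divergence, $\partial_i\big(v_i\,\mathrm{div}\,\mathbf{v} - v_j\,\partial_j v_i\big)$. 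Integrating this divergence over $\{g>t\}$ and applying the divergence theorem converts the volume integral of $\partial_i v_j\,\partial_j v_i$ into a surface contribution, which is exactly how the volume term on the right of \eqref{estimate-2.1.1} arises.

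The crucial geometric input, and the step I expect to be the main obstacle, is the treatment of the boundary integrals over $\{g>t\}\cap\partial\Omega$. Here both the condition $\mathbf{v}\cdot n=0$ on $\partial\Omega$ and the convexity of $\Omega$ enter. On $\partial\Omega$ I would decompose $\mathbf{v}$ into tangential components and use that the normal component vanishes; differentiating the constraint $\mathbf{v}\cdot n=0$ along tangential directions produces terms involving the second fundamental form of $\partial\Omega$. Convexity guarantees that the second fundamental form is nonnegative, so these curvature terms have a definite (favorable) sign and can be \emph{dropped} to yield the inequality rather than an identity — this is why \eqref{estimate-2.1.1} is stated as $\le$ and why the hypothesis is convexity. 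The careful bookkeeping is to show that, after using $\mathbf{v}\cdot n=0$, the boundary term $-\partial g/\partial n$ from \eqref{plan-green} together with the boundary contribution from the divergence rearrangement combines into the curvature quadratic form applied to the tangential field $\mathbf{v}$, which convexity renders $\le 0$ and hence discardable. Finally, on the interior level set $\{g=t\}$ I would bound the remaining first-derivative terms using $|\mathbf{v}|=\sqrt{t}$ there (since $g=t$), which produces the factor $2\sqrt{t}$ multiplying the curl and divergence terms in the first line of \eqref{estimate-2.1.1}. Assembling \eqref{plan-green}, \eqref{plan-id}, the divergence rearrangement, and the sign of the curvature term yields the claimed inequality for a.e.\ regular value $t$, the exceptional set being the measure-zero set of critical values handled by Sard's theorem.
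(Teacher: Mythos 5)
There is a genuine gap, and it sits in the structural backbone rather than in the geometric ingredients, which you identify correctly: the split of $\partial_j v_i$ into its antisymmetric part plus $\partial_i v_j$, the divergence structure $(\mathrm{div}\,\mathbf{v})^2-\partial_i v_j\,\partial_j v_i=\partial_i\bigl(v_i\,\mathrm{div}\,\mathbf{v}-v_j\,\partial_j v_i\bigr)$, the sign of the second fundamental form under convexity, and the factor $\sqrt t$ from $|\mathbf{v}|=\sqrt t$ on $\{g=t\}$. First, a sign error: the divergence theorem applied to $\nabla g$ on $\{g>t\}$ gives
\begin{equation*}
\int_{\{g=t\}}|\nabla g|\,d\sigma=-\int_{\{g>t\}}\Delta g\,dx+\int_{\{g>t\}\cap\partial\Omega}\frac{\partial g}{\partial n}\,d\sigma,
\end{equation*}
with both signs opposite to your first displayed identity; with your signs the bulk term $+2\int|\nabla\mathbf v|^2$ enters with an unfavorable sign and the argument cannot close. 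Second, and more seriously, the route through $\Delta g$ produces the term $2v_i\Delta v_i$, which you compute and then never address. It contains second derivatives of $\mathbf v$, has no sign, and occurs nowhere in the right-hand side of the claimed inequality; your algebraic identity only rewrites $\partial_j v_i\,\partial_j v_i$ and does not touch it. If you try to remove it by a further integration by parts over $\{g>t\}$, the boundary term it generates is exactly $\int_{\partial\{g>t\}}v_i\nu_j\partial_j v_i\,d\sigma=\tfrac12\int_{\partial\{g>t\}}\partial_\nu g\,d\sigma$, and the computation collapses back to the tautology $\int_{\{g>t\}}\Delta g\,dx=\int_{\partial\{g>t\}}\partial_\nu g\,d\sigma$: no estimate survives. (Relatedly, your claim that converting the volume integral of $\partial_i v_j\partial_j v_i$ into a surface contribution is "how the volume term arises" is backwards: that term stays as a volume integral.)

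The repair is to never form $\Delta g$. On $\{g=t\}$ write pointwise $|\nabla g|=-\partial_\nu g=-2v_i\nu_j\partial_j v_i$, split $\partial_j v_i=(\partial_j v_i-\partial_i v_j)+\partial_i v_j$, bound the antisymmetric piece by $2\sqrt t\,\bigl(\sum_{i,j}|\partial_j v_i-\partial_i v_j|^2\bigr)^{1/2}$ via Cauchy--Schwarz, and apply the divergence theorem on $\{g>t\}$ only to the first-order field $v_i\,\mathrm{div}\,\mathbf{v}-v_j\partial_j v_i$: this converts $-2\int_{\{g=t\}}v_i\nu_j\partial_i v_j\,d\sigma$ into the volume term $2\int_{\{g>t\}}\{(\mathrm{div}\,\mathbf v)^2-\partial_i v_j\partial_j v_i\}\,dx$, plus the level-set term $-2\int_{\{g=t\}}v_j\nu_j\,\mathrm{div}\,\mathbf v\,d\sigma\le 2\sqrt t\int_{\{g=t\}}|\mathrm{div}\,\mathbf v|\,d\sigma$, plus the curvature term on $\partial\Omega$ that convexity makes discardable. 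The paper performs exactly this computation in mollified form: it integrates by parts on $\int_\Omega\Psi(g)\,\partial_j v_i\,\partial_i v_j\,dx$ for a Lipschitz ramp $\Psi$ vanishing below $t$ and equal to $1$ above $\tau$, obtains the bulk quantity $\tfrac12\int\Psi'(g)|\nabla g|^2$, and recovers the level-set integrals by the coarea formula and Lebesgue differentiation as $\tau\to t^+$. That mollification also disposes of your final technical point: Sard's theorem requires $C^d$ smoothness for a scalar function on a $d$-dimensional domain, so for $g\in C^2$ and $d\ge3$ (the case of interest) it does not show that a.e.\ $t$ is a regular value; the ``a.e.\ $t$'' in the lemma comes from Lebesgue differentiation, not from regularity of the level sets.
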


\begin{proof}
 Let $\Psi$ be a nonnegative Lipschitz function on $[0, \infty)$.
It follows from integration by parts that

\begin{equation}
\aligned
\int_\Omega \Psi(|\mathbf{v}|^2)  
\frac{\partial v_i}{\partial x_j} \cdot \frac{\partial v_j}{\partial x_i}\, dx
& =
-2\int_\Omega \Psi^\prime (|\mathbf{v}|^2)  v_k\cdot \frac{\partial v_k}{\partial x_j}
\cdot v_i \cdot\frac{\partial v_j}{\partial x_i}\, dx\\
 &\qquad\qquad -\int_\Omega \Psi (|\mathbf{v}|^2) \cdot v_i \cdot \frac{\partial }{\partial x_i}
\big\{ \text{\rm div} (\mathbf{v})\big\}\, dx\\
&\qquad\qquad +\int_{\partial\Omega} \Psi (|\mathbf{v}|^2) v_i n_j
\frac{\partial v_j}{\partial x_i}\, d\sigma\\
&=-2\int_\Omega \Psi^\prime (|\mathbf{v}|^2)  v_k\cdot \frac{\partial v_k}{\partial x_j}
\cdot v_i \cdot\frac{\partial v_j}{\partial x_i}\, dx\\
&\qquad \qquad + 2\int_\Omega \Psi^\prime (|\mathbf{v}|^2)
v_k \cdot\frac{\partial v_k}{\partial x_i} \cdot v_i \cdot \text{\rm div}(\mathbf{v})\, dx\\
&\qquad\qquad +\int_\Omega \Psi (|\mathbf{v}|^2) \big\{
\text{\rm div}(\mathbf{v})\big\}^2\, dx\\
& \qquad\qquad +\int_{\partial\Omega}
\Psi (|\mathbf{v}|^2)
\left\{ v_i n_j \frac{\partial v_j}{\partial x_i}
-v_i n_i \text{\rm div} (\mathbf{v})\right\}\, d\sigma.
\endaligned
\end{equation}
This gives
\begin{equation}
\aligned
\int_\Omega & \Psi (|\mathbf{v}|^2)   \left\{
\big\{ \text{\rm div}(\mathbf{v})\big\}^2
-\frac{\partial v_i}{\partial x_j} \cdot \frac{\partial v_j}{\partial x_i}\right\}
\, dx\\
& =\int_{\partial\Omega}
\Psi (|\mathbf{v}|^2)
\left\{ v_i n_i \text{\rm div} (\mathbf{v})
-v_i n_j \frac{\partial v_j}{\partial x_i}
\right\}\, d\sigma
\\
&\qquad
+2\int_\Omega
\Psi^\prime (|\mathbf{v}|^2)
\left\{
v_k \cdot \frac{\partial v_k}{\partial x_j} \cdot v_i \cdot \frac{\partial v_j}{\partial x_i}
-v_k \cdot \frac{\partial v_k}{\partial x_i}
\cdot v_i \cdot \text{\rm div}(\mathbf{v})\right\}\, dx.
\endaligned
\end{equation}

Using the assumptions that $\mathbf{v}\cdot n=0$ on $\partial\Omega$ and
$\Omega$ is a convex  domain with $C^2$ boundary, we observe that
$$
v_i n_i \text{\rm div} (\mathbf{v})
-v_i n_j \frac{\partial v_j}{\partial x_i}
=-\beta (\mathbf{v}_T; \mathbf{v}_T)
\ge 0 \qquad \text{ on }\partial\Omega,
$$
where $\mathbf{v}_T=\mathbf{v}-  (\mathbf{v}\cdot n) n$ is the tangential component
of $\mathbf{v}$ on $\partial\Omega$
and $\beta (\cdot, \cdot)$ the second fundamental quadratic form of $\partial\Omega$
(see \cite[p.137]{Grisvard}). Hence,
\begin{equation}\label{2.3}
\aligned
2\int_\Omega 
\Psi^\prime (|\mathbf{v}|^2) &
\cdot v_k \cdot \frac{\partial v_k}{\partial x_j} \cdot v_i \cdot \frac{\partial v_j}{\partial x_i}
\, dx\\
&\le 2 \int_\Omega 
\Psi^\prime (|\mathbf{v}|^2)
\cdot v_k \cdot \frac{\partial v_k}{\partial x_i}
\cdot v_i \cdot \text{\rm div}(\mathbf{v})\, dx\\
&\qquad\qquad
+
\int_\Omega  \Psi (|\mathbf{v}|^2)   \left\{
\big\{ \text{\rm div}(\mathbf{v})\big\}^2
-\frac{\partial v_i}{\partial x_j} \cdot \frac{\partial v_j}{\partial x_i}\right\}
\, dx.
\endaligned
\end{equation}
Let $g=|\mathbf{v}|^2$. Then $|\nabla g|^2 =4 v_k \cdot \frac{\partial v_k}{\partial x_j}
\cdot v_i \cdot \frac{\partial v_i}{\partial x_j}$. 
It follows  from (\ref{2.3}) that
\begin{equation}\label{2.4}
\aligned
\frac12 \int_\Omega \Psi^\prime (g) \big|\nabla g |^2\, dx
&\le
2\int_\Omega
\Psi^\prime (g ) v_k\cdot \frac{\partial v_k}{\partial x_j}
\cdot v_i \left\{ \frac{\partial v_i}{\partial x_j}
-\frac{\partial v_j}{\partial x_i}\right\}\, dx\\
 &  \qquad \qquad +2 \int_\Omega 
\Psi^\prime (g )
\cdot v_k \cdot \frac{\partial v_k}{\partial x_i}
\cdot v_i \cdot \text{\rm div}(\mathbf{v})\, dx\\
&\qquad\qquad
+
\int_\Omega  \Psi (g )   \left\{
\big\{ \text{\rm div}(\mathbf{v})\big\}^2
-\frac{\partial v_i}{\partial x_j} \cdot \frac{\partial v_j}{\partial x_i}\right\}
\, dx.
\endaligned
\end{equation}

We now fix $0<t<\tau<\infty$. Let $\Psi$ be continuous so that
$\Psi(s)=1$ for $s\ge \tau $, $\Psi (s)=0$ for $s\le t $, and $\Psi$ is linear on $[t, \tau]$.
In view of (\ref{2.4}), we obtain
\begin{equation}\label{2.5}
\aligned
\frac{1}{2(\tau-t)}
\int_{t< g < \tau} |\nabla g|^2\, dx
& \le \frac{1}{\tau-t}
\int_{t< g<\tau}
|\nabla g|\, |\mathbf{v}| \left\{ \sum_{i,j} \big|
\frac{\partial v_i}{\partial x_j} -\frac{\partial v_j}{\partial x_i}\big|^2\right\}^{1/2}
\, dx
\\
& 
+ \frac{1}{\tau-t} \int_{t< g<\tau} 
|\nabla g|\, |\mathbf{v}|\, |\text{\rm div} (\mathbf{v})|\, dx\\
&
+\int_{g> t}
\Psi(g) \left\{  \big\{ \text{\rm div}(\mathbf{v})\big\}^2
-\frac{\partial v_i}{\partial x_j}
\cdot \frac{\partial v_j}{\partial x_i}\right\} \, dx.
\endaligned
\end{equation}
By the co-area formula, we may rewrite (\ref{2.5}) as
\begin{equation}\label{2.6}
\aligned
\frac{1}{2(\tau-t)}
\int_t^\tau \int_{g=s}  |\nabla g|\, d\sigma \, ds
\le  & \frac{1}{\tau-t} \int_t^\tau \int_{g=s}
|\mathbf{v}|
\left\{ \sum_{i,j} \big|
\frac{\partial v_i}{\partial x_j} -\frac{\partial v_j}{\partial x_i}\big|^2\right\}^{1/2}
\, d\sigma\, ds\\
& +\frac{1}{\tau -t}
\int_t^\tau \int_{g=s}
|\mathbf{v}|\, |\text{\rm div} (\mathbf{v})|\, d\sigma\, ds\\
&
+\int_{g> t}
\Psi(g) \left\{  \big\{ \text{\rm div}(\mathbf{v})\big\}^2
-\frac{\partial v_i}{\partial x_j}
\cdot \frac{\partial v_j}{\partial x_i}\right\} \, dx.
\endaligned
\end{equation}
Letting $\tau\to t^+$ in (\ref{2.6}), we obtain
the desired estimate by the Lebesgue's differentiation theorem.
\end{proof}

Next we apply Lemma \ref{lemma-2.1} to harmonic functions in $\Omega$ with normal 
derivatives vanishing on part of the boundary.

\begin{lemma}\label{lemma-2.2}
Let $\Omega$ be a bounded convex domain with $C^2$ boundary and $Q\in \partial\Omega$.
Let $u\in C^3(\overline{\Omega})$.
Suppose that $\Delta u=0$ in $\Omega$
and $\frac{\partial u}{\partial n}=0$ on $B(Q,2r)\cap \partial\Omega$ for some $r>0$.
Then for a.e. $t\in (0,\infty)$,
\begin{equation}\label{estimate-2.2.1}
\int_{g=t} |\nabla g|\, d\sigma
\le 6\sqrt{t} \int_{g=t} |\nabla u|\, |\nabla \varphi|\, d\sigma
+2\int_{g> t} |\nabla u|^2 |\nabla \varphi|^2\, dx,
\end{equation}
where $g=|(\nabla u)\varphi|^2$ and $\varphi \in C_0^\infty(B(Q,2r))$.
\end{lemma}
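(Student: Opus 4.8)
The plan is to apply Lemma \ref{lemma-2.1} with the vector field $\mathbf{v}=(\nabla u)\varphi$, i.e. $v_i=\varphi\,\partial u/\partial x_i$, and to control each of the resulting terms using the harmonicity of $u$ and the fact that $\partial u/\partial n=0$ on the relevant piece of $\partial\Omega$. First I would check the hypotheses of Lemma \ref{lemma-2.1}: since $u\in C^3(\overline{\Omega})$ and $\varphi\in C_0^\infty(B(Q,2r))$, the field $\mathbf{v}$ is in $C^2(\overline{\Omega},\mathbb{R}^d)$. The condition $\mathbf{v}\cdot n=0$ on $\partial\Omega$ holds because on $\text{supp}(\varphi)\cap\partial\Omega\subset B(Q,2r)\cap\partial\Omega$ we have $\mathbf{v}\cdot n=\varphi\,(\nabla u\cdot n)=\varphi\,\tfrac{\partial u}{\partial n}=0$, while off the support of $\varphi$ the field vanishes identically. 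Thus Lemma \ref{lemma-2.1} applies and gives the estimate \eqref{estimate-2.1.1} with $g=|\mathbf{v}|^2=|(\nabla u)\varphi|^2$.

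Next I would compute the ingredients appearing on the right-hand side of \eqref{estimate-2.1.1} in terms of $u$ and $\varphi$. Writing $v_i=\varphi\,u_i$ where $u_i=\partial u/\partial x_i$, the partial derivatives are $\partial v_i/\partial x_j=\varphi\,u_{ij}+u_i\,\partial\varphi/\partial x_j$. The key simplifications come from harmonicity. For the divergence, $\text{div}(\mathbf{v})=\varphi\,\Delta u+\nabla u\cdot\nabla\varphi=\nabla u\cdot\nabla\varphi$ since $\Delta u=0$; hence $|\text{div}(\mathbf{v})|\le|\nabla u|\,|\nabla\varphi|$. For the antisymmetric part, the second-derivative terms cancel because $u_{ij}=u_{ji}$, leaving $\partial v_i/\partial x_j-\partial v_j/\partial x_i=u_i\,\partial\varphi/\partial x_j-u_j\,\partial\varphi/\partial x_i$, so the norm $(\sum_{i,j}|\partial v_i/\partial x_j-\partial v_j/\partial x_i|^2)^{1/2}$ is bounded by $C|\nabla u|\,|\nabla\varphi|$ with a dimensional constant. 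These two bounds together convert the boundary integral $2\sqrt{t}\int_{\{g=t\}}\{\cdots\}\,d\sigma$ into something controlled by $C\sqrt{t}\int_{\{g=t\}}|\nabla u|\,|\nabla\varphi|\,d\sigma$; the factor $6$ in \eqref{estimate-2.2.1} absorbs the constant from the antisymmetric term together with the divergence term.

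The term I expect to require the most care is the volume integral $2\int_{\{g>t\}}\{|\text{div}(\mathbf{v})|^2-\partial v_i/\partial x_j\cdot\partial v_j/\partial x_i\}\,dx$. The natural-looking move is to show that the integrand is bounded by $|\nabla u|^2|\nabla\varphi|^2$, which is exactly the form appearing in \eqref{estimate-2.2.1}. I would expand $\partial v_i/\partial x_j\cdot\partial v_j/\partial x_i$ using the product rule. The term quadratic in $\nabla\varphi$ is $u_i u_j\,\partial\varphi/\partial x_j\,\partial\varphi/\partial x_i=(\nabla u\cdot\nabla\varphi)^2=|\text{div}(\mathbf{v})|^2$, which therefore cancels exactly against the $|\text{div}(\mathbf{v})|^2$ term. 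The term quadratic in second derivatives, $\varphi^2\,u_{ij}u_{ji}=\varphi^2|\nabla^2 u|^2$, is $\ge 0$ and carries a minus sign, so it only helps. The remaining cross terms, linear in $\nabla\varphi$ and linear in $\nabla^2 u$, of the form $\varphi\,u_{ij}\,u_j\,\partial\varphi/\partial x_i$, are the delicate piece: they contain the unwanted factor $\nabla^2 u$. Here I would integrate these cross terms by parts in $x_i$ to move the derivative off $u_{ij}$, using $\Delta u=0$ to kill the resulting $\sum_i u_{ij i}=\partial_j(\Delta u)=0$ contribution, thereby re-expressing everything in terms of $\nabla u$ and derivatives of $\varphi$ only. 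The outcome, after these cancellations and integrations by parts, bounds the volume term by $2\int_{\{g>t\}}|\nabla u|^2|\nabla\varphi|^2\,dx$, giving precisely \eqref{estimate-2.2.1}. The main obstacle is carrying out this integration by parts cleanly while tracking the superlevel-set domain $\{g>t\}$ and confirming that the boundary contributions from $\partial\{g>t\}$ either vanish or recombine into the stated boundary integral.
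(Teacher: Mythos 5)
Your setup is exactly the paper's: apply Lemma \ref{lemma-2.1} to $\mathbf{v}=(\nabla u)\varphi$, note $\mathbf{v}\cdot n=\varphi\,\frac{\partial u}{\partial n}=0$ on $\partial\Omega$, use $\operatorname{div}(\mathbf{v})=\nabla u\cdot\nabla\varphi$ and the cancellation of the symmetric Hessian in $\frac{\partial v_i}{\partial x_j}-\frac{\partial v_j}{\partial x_i}$ to bound the boundary term. (Minor point: the antisymmetric part satisfies the dimension-free bound $\bigl(\sum_{i,j}|u_i\varphi_j-u_j\varphi_i|^2\bigr)^{1/2}=\{2|\nabla u|^2|\nabla\varphi|^2-2(\nabla u\cdot\nabla\varphi)^2\}^{1/2}\le\sqrt{2}\,|\nabla u|\,|\nabla\varphi|$, so together with $|\operatorname{div}\mathbf{v}|\le|\nabla u|\,|\nabla\varphi|$ the bracket is at most $3|\nabla u|\,|\nabla\varphi|$ and the factor $6$ is justified; calling the constant ``dimensional'' and hoping $6$ absorbs it is looser than what the stated constant requires, but harmless.)

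The genuine gap is in the volume term. You correctly expand
$|\operatorname{div}(\mathbf{v})|^2-\frac{\partial v_i}{\partial x_j}\frac{\partial v_j}{\partial x_i}
=-\varphi^2|\nabla^2u|^2-2\varphi\,u_{ij}u_i\varphi_j$,
but then propose to handle the cross term by integrating by parts over $\{g>t\}$. This is both unnecessary and problematic: integration by parts over the superlevel set produces boundary integrals on $\{g=t\}$ containing $\nabla^2u$, which do not recombine into the boundary term of \eqref{estimate-2.2.1} (that term involves only $|\nabla u|\,|\nabla\varphi|$), and you offer no mechanism for controlling them --- you yourself flag this as the unresolved obstacle. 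The paper's argument is purely pointwise: complete the square, using the negative term $-\varphi^2|\nabla^2u|^2$ (which you noted ``only helps'' but did not exploit) to absorb the cross term,
\begin{equation*}
-\varphi^2|\nabla^2u|^2-2\varphi\,u_{ij}u_i\varphi_j
=-\sum_{i,j}\Bigl(\varphi\,u_{ij}+u_i\varphi_j\Bigr)^2+|\nabla u|^2|\nabla\varphi|^2
\le|\nabla u|^2|\nabla\varphi|^2 ,
\end{equation*}
which immediately gives the factor $2\int_{\{g>t\}}|\nabla u|^2|\nabla\varphi|^2\,dx$. With this observation no integration by parts is needed and harmonicity enters the volume term only through $\operatorname{div}(\mathbf{v})=\nabla u\cdot\nabla\varphi$.
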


\begin{proof}
Let $\mathbf{v}=(\nabla u)\varphi$. Then $\mathbf{v}\cdot n=0$ on $\partial\Omega$
and
$$
\frac{\partial v_i}{\partial x_j}
=\varphi \frac{\partial^2 u}{\partial x_i\partial x_j}
+\frac{\partial u}{\partial x_i}\frac{\partial \varphi}{\partial x_j}.
$$
It follows that $\text{\rm div}(\mathbf{v})
=(\Delta u)\varphi +\nabla u\cdot \nabla \varphi
=\nabla u\cdot \nabla \varphi$ and
$$
\frac{\partial v_i}{\partial x_j}
-
\frac{\partial v_j}{\partial x_i}
=\frac{\partial u}{\partial x_i}
\frac{\partial \varphi}{\partial x_j}
-
\frac{\partial u}{\partial x_j}
\frac{\partial \varphi}{\partial x_i}.
$$
Hence,
\begin{equation}\label{2.2.1}
\aligned
 &\left(\sum_{i,j}
\big|\frac{\partial v_i}{\partial x_j}
-
\frac{\partial v_j}{\partial x_i}\big|^2\right)^{1/2}
+|\text{\rm div}(\mathbf{v})|\\
&\qquad =\big\{
 2|\nabla u|^2|\nabla \varphi|^2 -2(\nabla u\cdot\nabla \varphi)^2 \big\}^{1/2}
+|\nabla u\cdot \nabla \varphi|\\
&\qquad \le 3|\nabla u|\, |\nabla\varphi|.
\endaligned
\end{equation}

Next note that
\begin{equation}\label{2.2.2}
\aligned
|\text{\rm div}(\mathbf{v})|^2 -\frac{\partial v_j}{\partial x_i}
\frac{\partial v_i}{\partial x_j}
& =
-\varphi^2 |\nabla^2 u|^2 -2\varphi \cdot \frac{\partial^2 u}{\partial x_i\partial x_j}
\cdot \frac{\partial u}{\partial x_i} \cdot \frac{\partial\varphi}{\partial x_j}\\
&
=-\sum_{i,j}
\left( \varphi \frac{\partial^2 u}{\partial x_i\partial x_j}
-\frac{\partial u}{\partial x_i}
\frac{\partial\varphi}{\partial x_j}\right)^2
+|\nabla u|^2 |\nabla \varphi|^2\\
& \le |\nabla u|^2 |\nabla \varphi|^2.
\endaligned
\end{equation}
In view of (\ref{2.2.1})-(\ref{2.2.2}),
 estimate (\ref{estimate-2.2.1}) in Lemma \ref{lemma-2.2}
 now follows readily from Lemma \ref{lemma-2.1}.
\end{proof}

\begin{lemma}\label{lemma-2.3}
Let $\Omega$ be a bounded convex domain in $\mathbb{R}^d$, $d\ge 3$.
Let $f$, $g$ be two nonnegative functions on $\overline{\Omega}$.
Suppose that $f\in C(\overline{\Omega})$, $g\in C^1(\overline{\Omega})$ and
\begin{equation}\label{estimate-2.3.1}
\int_{g=t}
|\nabla g|\, d\sigma
\le C_0 \left\{ \sqrt{t}
\int_{g=t} |f|\, d\sigma
+\int_{g> t} | f|^2\, dx\right\}
\end{equation}
for a.e. $t\in (0, \infty)$.
Then there exists $C$ depending only on $d$, $q$, $C_0$ and 
the Lipschitz character of $\Omega$ such that
\begin{equation}\label{estimate-2.3.2}
\left\{ \int_\Omega |g|^q\, dx \right\}^{1/q}
\le C \left\{ \int_\Omega |f|^{2p}\, dx \right\}^{1/p}
+C|\Omega|^{\frac{1}{q}-1}
 \int_\Omega |g|\, dx,
\end{equation}
where $p> 1$ and $\frac{1}{q}=\frac{1}{p}-\frac{2}{d}$.

\end{lemma}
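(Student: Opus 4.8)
The plan is to control the distribution function $\mu(t)=|\{x\in\Omega:g(x)>t\}|$ and then recover the $L^q$ bound from the layer-cake identity $\int_\Omega g^q\,dx=q\int_0^\infty t^{q-1}\mu(t)\,dt$. The two geometric tools are the co-area formula and the relative isoperimetric inequality, which holds on a convex domain with a constant depending only on $d$ and the Lipschitz character: for a.e. $t$ one has $\min\{\mu(t),|\Omega|-\mu(t)\}^{(d-1)/d}\le C\,\sigma(\{g=t\})$, where $\sigma(\{g=t\})=H^{d-1}(\{g=t\})$. The goal of the first stage is to convert the hypothesis (\ref{estimate-2.3.1}) into a differential inequality for $\mu$.

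To do this, I would first bound the perimeter by the flux appearing in (\ref{estimate-2.3.1}). By the Cauchy--Schwarz inequality on a level set,
\[
\sigma(\{g=t\})^2\le\left(\int_{\{g=t\}}|\nabla g|\,d\sigma\right)\left(\int_{\{g=t\}}\frac{d\sigma}{|\nabla g|}\right)=\left(\int_{\{g=t\}}|\nabla g|\,d\sigma\right)(-\mu'(t)),
\]
the last identity being the co-area formula for $\mu$. Next I would remove the boundary term $\sqrt t\int_{\{g=t\}}f\,d\sigma$ from (\ref{estimate-2.3.1}): writing $\Phi(t)=\int_{\{g>t\}}f^2\,dx$, Cauchy--Schwarz gives $\int_{\{g=t\}}f\,d\sigma\le(-\Phi'(t))^{1/2}\big(\int_{\{g=t\}}|\nabla g|\,d\sigma\big)^{1/2}$, and Young's inequality then absorbs the flux factor back into the left-hand side of (\ref{estimate-2.3.1}), yielding $\int_{\{g=t\}}|\nabla g|\,d\sigma\le C\{t(-\Phi'(t))+\Phi(t)\}$. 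Combining these facts with the isoperimetric inequality and restricting to $t\ge t_*$, where $t_*$ is defined by $\mu(t_*)=|\Omega|/2$ so that $\mu(t)\le|\Omega|/2$, I obtain the differential inequality
\[
\mu(t)^{2-\frac2d}\le C\big\{t(-\Phi'(t))+\Phi(t)\big\}(-\mu'(t)).
\]

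With this in hand I would split $\int_\Omega g^q = q\int_0^{t_*}t^{q-1}\mu\,dt+q\int_{t_*}^\infty t^{q-1}\mu\,dt$. For the low part, $\mu\le|\Omega|$ together with the elementary bound $t_*\le\frac{2}{|\Omega|}\int_\Omega g\,dx$ (from $t_*\mu(t_*)\le\int_\Omega g\,dx$) gives $q\int_0^{t_*}t^{q-1}\mu\,dt\le|\Omega|\,t_*^q$, whose $q$-th root is exactly the term $C|\Omega|^{1/q-1}\int_\Omega g\,dx$ in (\ref{estimate-2.3.2}). For the high part I would feed the H\"older estimate $\Phi(t)\le\|f\|_{L^{2p}}^2\,\mu(t)^{1-1/p}$ into the differential inequality and integrate; the exponent relation $\frac1q=\frac1p-\frac2d$ is precisely what makes the resulting integral close, since for the leading term it forces the decay exponent of $\mu$ to equal $1+\frac1q$.

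The main obstacle is the high-part integration, which sits exactly at the Sobolev-critical exponent. The crude pointwise decay $\mu(t)\lesssim(t-t_*)^{-q}$ extracted from the differential inequality makes $\int t^{q-1}\mu\,dt$ only logarithmically divergent, so a direct bound would produce a constant depending on $\|g\|_{L^\infty}$, which is not permitted. The remedy is a self-improving argument: rather than estimating $\mu$ pointwise, one integrates the differential inequality against $t^{q-1}$, inserts the H\"older bound on $\Phi$, and recovers a quantity comparable to $\int_\Omega g^q\,dx$ on both sides, which is then absorbed to give $\|g\|_{L^q}\le C\|f\|_{L^{2p}}^2+C|\Omega|^{1/q-1}\|g\|_{L^1}$. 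The most delicate bookkeeping is the contribution of the term $t(-\Phi'(t))$, which after an integration by parts in $t$ must be shown not to destroy the absorption; within Lemma \ref{lemma-2.1} and this argument, convexity of $\Omega$ enters only through the relative isoperimetric inequality and the favorable sign furnished by the second fundamental form.
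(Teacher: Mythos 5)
Your first stage is sound and in fact runs parallel to the paper's opening move: the pointwise-in-$t$ bound $\int_{\{g=t\}}|\nabla g|\,d\sigma\le C\{t(-\Phi'(t))+\Phi(t)\}$ that you get from Cauchy--Schwarz on the level set plus Young's inequality is a level-set version of the paper's estimate \eqref{estimate-2.3.4}, which is obtained by integrating \eqref{estimate-2.3.1} against $t^\alpha$, applying the co-area formula and absorbing the cross term; your low-part bound via $t_*$ is also correct. The gap is in the endgame. Once you insert the H\"older bound $\Phi(t)\le A\,\mu(t)^{1-1/p}$ with $A=\|f\|_{L^{2p}(\Omega)}^2$ and the relative isoperimetric inequality, everything you have derived about $\mu$ is implied by the scalar differential inequality $\mu^{1+1/q}\le CA\,(-\mu')$ (note $2-\frac2d-(1-\frac1p)=1+\frac1q$), together with monotonicity and $\mu(t_*)=|\Omega|/2$. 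That information is genuinely insufficient for \eqref{estimate-2.3.2}: the profile $\mu(t)=A^q t^{-q}$ for $t_0\le t\le T$, extended by the constant $|\Omega|=A^q t_0^{-q}$ below $t_0$ and by $0$ above $T$, satisfies $\mu^{1+1/q}=\tfrac{A}{q}(-\mu')$ exactly and has $|\Omega|^{\frac1q-1}\int_0^\infty\mu\,dt\approx A$, while $q\int_0^\infty t^{q-1}\mu\,dt\approx A^q\bigl(1+q\log(T/t_0)\bigr)\to\infty$ as $T\to\infty$. Hence no manipulation of that inequality --- in particular not multiplying by $t^{q-1}$ and integrating --- can yield $\|g\|_{L^q}\le CA+C|\Omega|^{\frac1q-1}\|g\|_{L^1}$. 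The proposed absorption also fails at the algebraic level: integrating the inequality against $t^{q-1}$ places $\int t^{q-1}\mu^{2-2/d}\,dt$ on the left, and since $2-\frac2d>1$ this quantity is \emph{dominated by} $|\Omega|^{1-2/d}\int t^{q-1}\mu\,dt$ rather than dominating it, so it cannot serve as the absorbing side.

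What is lost in your reduction is a coupling between different level sets of exactly Sobolev strength, and that is what the paper's proof supplies. After establishing $\int_\Omega g^{2\beta-2}|\nabla g|^2\,dx\le C\int_\Omega g^{2\beta-1}f^2\,dx$ (your first stage in disguise), it applies the Sobolev--Poincar\'e inequality \eqref{Sobolev-Poincare-inequality} to $w=g^\beta$ with $2\beta=(1-\frac2d)q$, which produces $\bigl(\int_\Omega g^{q}\,dx\bigr)^{(d-2)/d}$ --- a strictly sublinear power of the target --- on the left of \eqref{estimate-2.3.5}; then H\"older with $(2\beta-1)p_0=q$ and Young's inequality let the $\varepsilon\bigl(\int g^q\bigr)^{(d-2)/d}$ term be absorbed. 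If you insist on working with the distribution function, you would need to reprove that Sobolev step in level-set form, i.e., a weighted Hardy--Bliss inequality bounding $\int_0^\infty t^{2\beta-2}\mu^{2(d-1)/d}(-\mu')^{-1}\,dt$ from below by $\bigl(\int_0^\infty t^{q-1}\mu\,dt\bigr)^{(d-2)/d}$ minus an average term, \emph{before} discarding the joint information in favor of a pointwise ODE for $\mu$. As written, your argument throws that information away at the moment it passes to the differential inequality, and the critical-exponent integration cannot be recovered afterwards.
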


\begin{proof} 
By considering $g_\delta =g +\delta$ and then letting $\delta\to 0^+$,
 we may assume that $g$ is bounded from below by a positive constant.
Using the co-area formula and (\ref{estimate-2.3.1}), we obtain
\begin{equation}\label{estimate-2.3.3}
\aligned
\int_\Omega |g|^\alpha  |\nabla g|^2\, dx
& =\int_0^\infty  t^\alpha \int_{g=t} |\nabla g|\, d\sigma dt\\
& \le C_0\int_0^\infty
t^{\alpha}
\left\{
t^{\frac12}
\int_{g=t} |f|\, d\sigma
+\int_{g> t} | f|^2\, dx\right\} dt\\
& \le C
\int_\Omega |g|^{\alpha +\frac12} |\nabla g||f|\, dx
+C\int_\Omega |g|^{\alpha +1} |f|^2\, dx,
\endaligned
\end{equation}
where $\alpha >-1 $. By the Cauchy inequality with an $\varep>0$,
$$
 \int_\Omega |g|^{\alpha +\frac12} |\nabla g||f|\, dx
\le \varep \int_\Omega |g|^\alpha |\nabla g|^2\, dx
+C_\varep \int_\Omega | g|^{\alpha +1} |f|^2\, dx.
$$
This, together with (\ref{estimate-2.3.3}), implies that
\begin{equation}\label{estimate-2.3.4}
\int_\Omega |g|^\alpha  |\nabla g|^2\, dx
\le 
C\int_\Omega |g|^{\alpha +1} |f|^2\, dx.
\end{equation}

Since $\Omega$ is convex, there exists a constant $C$,
depending only on $d$ and $[\text{diam}(\Omega)]^d/|\Omega|$,
such that
\begin{equation}\label{Sobolev-Poincare-inequality}
\left\{ \int_\Omega |w-w_\Omega|^{\frac{2d}{d-2}} \, dx\right\}^{\frac{d-2}{d}}
\le C\int_\Omega |\nabla w|^2\, dx,
\end{equation}
where $w\in C^1(\overline{\Omega})$
and $w_\Omega$ denotes the average of $w$ over $\Omega$.
Let $\beta> (1/2) $ and $w=g^\beta$ in (\ref{Sobolev-Poincare-inequality}). We obtain
\begin{equation}\label{Sobolev-inequality}
\left\{ \int_\Omega
|g|^{\frac{2d\beta}{d-2}}\, dx
\right\}^{\frac{d-2}{d}}
\le C\int_\Omega |g|^{2\beta-2}|\nabla g|^2\, dx
+C|\Omega|^{-1-\frac{2}{d}}
\left\{ \int_\Omega |g|^{\beta}\, dx\right\}^2.
\end{equation}
Let $\alpha=2\beta-2$. It follows from (\ref{estimate-2.3.4})
and (\ref{Sobolev-inequality}) that
\begin{equation}\label{estimate-2.3.5}
\left\{ \int_\Omega
|g|^{\frac{2d\beta}{d-2}}\, dx
\right\}^{\frac{d-2}{d}}
\le 
C\int_\Omega |g|^{2\beta -1} |f|^2\, dx
+C|\Omega|^{-1-\frac{2}{d}}
\left\{ \int_\Omega |g|^{\beta}\, dx\right\}^2,
\end{equation}
for any $\beta> (1/2)$.

We now choose $p_0> 1$ so that $(2\beta-1)p_0 =\frac{2d\beta}{d-2}$.
By H\"older's inequality,
\begin{equation}\label{estimate-2.3.6}
\aligned
\int_\Omega |g|^{2\beta -1} |f|^2\, dx
&\le \left\{\int_\Omega
|g|^{(2\beta-1)p_0}\, dx\right\}^{1/p_0}
\left\{ \int_\Omega |f|^{2p_0^\prime}\, dx \right\}^{1/p_0^\prime}\\
&\le
\varep \left\{ \int_\Omega |g|^{(2\beta-1)p_0}\, dx\right\}^{\frac{p_1}{p_0}}
+C_\varep
\left\{ \int_\Omega |f|^{2 p_0^\prime}\, dx
\right\}^{\frac{p_1^\prime}{p_0^\prime}},
\endaligned
\end{equation}
where $p_1=\frac{2\beta}{2\beta-1}$.
Note that $\frac{p_1}{p_0}=\frac{d-2}{d}$. 
Also $2p_0^\prime
=\frac{4d\beta}{d-2+4\beta}$ and $\frac{p_1^\prime}{p_0^\prime}
=\frac{d-2+4\beta}{d}$.
In view of 
(\ref{estimate-2.3.5})-(\ref{estimate-2.3.6}), we obtain
\begin{equation}\label{2.3.7}
\left\{ \int_\Omega
|g|^{\frac{2d\beta}{d-2}}\, dx\right\}^{\frac{d-2}{d}}
\le C\left\{ \int_\Omega |f|^{\frac{4d\beta}{d-2+4\beta}}
\, dx\right\}^{\frac{d-2+4\beta}{d}}
+C|\Omega|^{-1-\frac{2}{d}}
\left\{ \int_\Omega |g|^{\beta}\, dx\right\}^2.
\end{equation}

Finally we let $p=\frac{2d\beta}{d-2+4\beta}$ and $q=\frac{2d\beta}{d-2}$.
It follows from (\ref{2.3.7}) that
\begin{equation}\label{2.3.8}
\left\{ \int_\Omega |g|^q\, dx \right\}^{1/q}
\le C \left\{ \int_\Omega |f|^{2p}\, dx \right\}^{1/p}
+C|\Omega|^{-\frac{1}{2\beta}-\frac{1}{d\beta}}
\left\{ \int_\Omega |g|^{\beta}\, dx\right\}^{1/\beta}.
\end{equation}
Note that $\frac{1}{q}=\frac{1}{p}-\frac{2}{d}$ and $2\beta=(1-\frac{2}{d})q$.
Also $-\frac{1}{2\beta}-\frac{1}{d\beta}
=\frac{1}{q}-\frac{1}{\beta}$. Since $\beta<q$, the desired estimate follows from (\ref{2.3.8})
by H\"older's inequality.
\end{proof}

We are now in a position to give the proof of Theorem \ref{Theorem-2.1}.

\noindent{\bf Proof of Theorem \ref{Theorem-2.1}.}

Let $1<\rho<\tau<2$.
Choose $\varphi\in C_0^\infty(B(Q,\tau r))$ such that $\varphi=1$ in $B(Q,\rho r)$
and $|\nabla \varphi|\le C[(\tau-\rho)r]^{-1}$.
It follows from Lemmas \ref{lemma-2.2} and \ref{lemma-2.3} that
\begin{equation}\label{2.4.1}
\aligned
& \left\{\int_\Omega 
 |(\nabla u)\varphi|^{2q}\, dx\right\}^{1/q}\\
& \le C \left\{ \int_\Omega |\nabla u|^{2p}
|\nabla \varphi|^{2p}\, dx \right\}^{1/p}
+C |\Omega|^{\frac{1}{q}-1}
\int_\Omega |(\nabla u)\varphi|^{2} \, dx,
\endaligned
\end{equation}
where $p> 1$ and $\frac{1}{q}=\frac{1}{p}-\frac{2}{d}$.
This yields that 
\begin{equation}\label{2.4.2}
 \left\{ \frac{1}{r^d}\int_{\Omega\cap B(Q,\rho r)}
|\nabla u|^{2q}\, dx \right\}^{1/(2q)}
\le C 
\left\{ \frac{1}{r^d} \int_{\Omega\cap B(Q,\tau r)}
|\nabla u|^{2p}\, dx\right\}^{1/(2p)}
\end{equation}
for any $p>1 $ and $\frac{1}{q}=\frac{1}{p}-\frac{2}{d}$, where
$1<\rho <\tau<2$. 
By a simple iteration argument, we obtain
\begin{equation}\label{2.4.10}
\left\{ \frac{1}{r^d}\int_{\Omega\cap B(Q,r)}
|\nabla u|^{q}\, dx \right\}^{1/q}
\le C 
\left\{ \frac{1}{r^d} \int_{\Omega\cap B(Q,3r/2)}
|\nabla u|^{p}\, dx\right\}^{1/p},
\end{equation}
for any $2<p<q<\infty$.
Estimate (\ref{estimate-2.1})
follows readily from (\ref{2.4.10}) and the reverse H\"older inequality,
\begin{equation}\label{classical-reverse}
\left\{ \frac{1}{r^d}\int_{\Omega\cap B(Q,3r/2)}
|\nabla u|^{\bar{p}}\, dx \right\}^{1/\bar{p}}
\le C 
\left\{ \frac{1}{r^d} \int_{\Omega\cap B(Q,2r)}
|\nabla u|^2\, dx\right\}^{1/2},
\end{equation}
where $\bar{p}>2$.
We mention that (\ref{classical-reverse})
holds even for solutions of elliptic systems
of divergence form with bounded measurable coefficients.
See e.g. \cite[Chapter V]{Giaquinta} for the interior case.
The boundary case follows from the interior case by a reflection argument.
\qed

\begin{remark}\label{remark-2.2}
{\rm
Let $\Omega$ be a bounded Lipschitz domain in $\mathbb{R}^d$, $d\ge 2$. 
Suppose that 
$\Delta u=0$ in $\Omega$, $(\nabla u)^*\in L^2(\partial\Omega)$ and
$\frac{\partial u}{\partial n}=0$ on $B(Q,3r)\cap \partial\Omega$.
Then the estimate (\ref{estimate-2.1}) holds for $2<q<3+\varep$ if $d\ge 3$;
and for $2<q<4+\varep$ if $d=2$.
To show this, one uses the fact that the $L^2$ Neumann problem
in Lipschitz domains is solvable as well as the observation
that $u$ is $C^\alpha$ in $B(Q,2r)\cap \overline{\Omega}$ for some $\alpha>0$ if $d\ge 3$;
and for some $\alpha>(1/2)$ if $d=2$.
We refer the reader to \cite[pp.188-189]{Shen-2005}, where 
the same estimate
was proved for a Lipschitz domain $\Omega$,
under the Dirichlet condition $u=0$ on $B(Q,3r)\cap\partial\Omega$.
The proof in \cite{Shen-2005} extends easily to the case of the Neumann boundary condition.
We point out that if $d\ge 3$, the $C^\alpha$  ($\alpha>0$) estimate follows from the De Giorgi-Nash estimate
by a reflection argument.
For the case $d=2$, one may use the solvability of the $L^p$ Neumann problem in Lipschitz domains
for some $p=\bar{p}>2$ and the square function estimates to show that
$\nabla u\in L^{\bar{p}}_{1/\bar{p}}(B(Q,2r)\cap \Omega)\subset L^{2\bar{p}} (B(Q,2r)\cap \Omega)$.
By Sobolev imbedding, this implies that $u$ is $C^\alpha$ on $B(Q,2r)\cap \overline{\Omega}$
for some $\alpha>(1/2)$.
If $\Omega$ is $C^1$, the estimate (\ref{estimate-2.1})
holds for any $d\ge 2$ and $q>2$. This follows from the fact that the $L^p$ Neumann problem
in $C^1$ domains is solvable for any $p>2$.
Since the results in this paper do not use the estimates mentioned above, 
we omit the details here.
}
\end{remark}

\section{Weak reserve H\"older inequality on the boundary}

The goal of this section is to prove the following.

\begin{thm}\label{Theorem-3.1}
Under the same conditions on $\Omega$ and $u$ as in Theorem \ref{Theorem-2.1}, we have
\begin{equation}\label{estimate-3.1}
\left\{ \frac{1}{r^{d-1}}
\int_{B(Q,r)\cap \partial\Omega} 
|(\nabla u)^*|^p\, d\sigma \right\}^{1/p}
\le C 
\left\{ \int_{B(Q,2r)\cap \partial\Omega} 
|(\nabla u)^*|^2 \, d\sigma \right\}^{1/2},
\end{equation}
for any $p>2$, where $C$ depends only on $d$, $p$ and the Lipschitz character of $\Omega$.
\end{thm}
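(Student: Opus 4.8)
The plan is to follow the reduction scheme of Kim and Shen \cite{Kim-Shen}, feeding in the interior reverse Hölder inequality of Theorem \ref{Theorem-2.1} at precisely the one place where the argument of \cite{Kim-Shen} lost a power of $\delta$. The whole point is to upgrade the pointwise Hessian bound from the exponents $d/2$ of (\ref{lower-dim}) or $2-\alpha$ of (\ref{high-dim}) to the near-optimal exponent $1+\eta$ of (\ref{new-high-dim}); once (\ref{new-high-dim}) is in hand, the passage to the boundary estimate (\ref{estimate-3.1}) is the machinery already developed in \cite{Kim-Shen}.

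First I would establish (\ref{new-high-dim}). Fix $x\in B(Q,r)\cap\Omega$; we may assume $\delta(x)<r$, since for $\delta(x)\gtrsim r$ the plain interior estimate already gives the claim. Since $\nabla v$ is harmonic, the interior gradient estimate on the ball $B(x,\delta(x)/2)\subset\Omega$ yields, for any $q>2$,
\begin{equation}
|\nabla^2 v(x)|\le \frac{C}{\delta(x)}\left\{\frac{1}{\delta(x)^d}\int_{B(x,\delta(x)/2)}|\nabla v|^q\,dy\right\}^{1/q}.
\end{equation}
Because $B(x,\delta(x)/2)$ is a small ball well inside $B(Q,3r)\cap\Omega$ when $\delta(x)<r$, I would bound the inner integral by the corresponding integral over a fixed multiple of $B(Q,r)\cap\Omega$ and apply Theorem \ref{Theorem-2.1} (using the vanishing of $\partial u/\partial n$ on $B(Q,3r)\cap\partial\Omega$) to pass from the $L^q$ average to the $L^2$ average at scale $r$. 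This produces
\begin{equation}
|\nabla^2 v(x)|\le \frac{C}{r}\left(\frac{r}{\delta(x)}\right)^{1+\frac{d}{q}}\left\{\frac{1}{r^d}\int_{B(Q,3r)\cap\Omega}|\nabla v|^2\,dy\right\}^{1/2},
\end{equation}
which is exactly (\ref{new-high-dim}) upon setting $\eta=d/q$ (any $\eta\in(0,d/2)$ is attained, and larger $\eta$ gives a weaker, hence acceptable, estimate). The one technical point to handle carefully is the geometry for $x$ near $\partial B(Q,r)$, where the comparison ball must be recentered at the nearest boundary point; this is a routine covering argument supported by the vanishing of $\partial u/\partial n$ on all of $B(Q,3r)\cap\partial\Omega$.

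With (\ref{new-high-dim}) available I would carry out the reduction as in \cite{Kim-Shen}: the square function estimates for harmonic functions in Lipschitz domains (valid here, since convex domains are Lipschitz and the $L^2$ Neumann problem is solvable) together with the local $W^{2,2}$ estimate available in convex domains bound $\int_{B(Q,r)\cap\partial\Omega}|(\nabla u)^*|^p\,d\sigma$ by the product of the key quantity (\ref{key-term}) with the $L^2$ quantity $\int_{B(Q,2r)\cap\partial\Omega}|(\nabla u)^*|^2\,d\sigma$, up to harmless lower-order terms. Substituting (\ref{new-high-dim}) into (\ref{key-term}) turns the power of $\delta(x)$ into $1-t-\eta(p-2)$; choosing $t$ and $\eta$ small (possible for every fixed $p>2$) makes this exponent positive, so that $\sup_{x\in B(Q,r)\cap\Omega}\delta(x)^{1-t-\eta(p-2)}\lesssim r^{1-t-\eta(p-2)}$ is finite and the $\delta$-singularity is defeated. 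Collecting the powers of $r$ and using the local $W^{2,2}$ estimate to convert the resulting solid $L^2$ integral of $\nabla v$ back into the boundary integral of $|(\nabla u)^*|^2$ yields (\ref{estimate-3.1}).

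The main obstacle is concentrated entirely in (\ref{new-high-dim}): it is the gain from Theorem \ref{Theorem-2.1}, namely that on a convex domain the interior reverse Hölder inequality for $\nabla v$ holds with \emph{arbitrarily large} exponent $q$, that lets $\eta=d/q$ be pushed to $0$ and hence keeps $1-t-\eta(p-2)$ positive for every $p>2$. The $d/2$ bound of \cite{Kim-Shen} forced $\eta=d/2$ and broke down for large $p$ once $d\ge 3$. Everything after (\ref{new-high-dim})—the square-function bootstrap and the bookkeeping of scales—is technically involved but follows \cite{Kim-Shen} with only cosmetic changes, so I do not expect genuine difficulty there.
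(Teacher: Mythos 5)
Your proposal is correct and follows essentially the same route as the paper: the paper's proof also splits $(\nabla u)^*$ into a local and a far part, invokes the Kim--Shen lemma (Lemma \ref{lemma-3.3}) on a local Lipschitz subdomain together with the local $W^{2,2}$ estimate, and then derives precisely your Hessian bound $|\nabla^2 u(x)|\le C r^{d/q}[\delta(x)]^{-1-d/q}\{r^{-d}\int|\nabla u|^2\}^{1/2}$ from the interior estimate plus Theorem \ref{Theorem-2.1}, so that the exponent of $\delta(x)$ becomes $1-t-\frac{d}{q}(p-2)$ and is made positive by taking $q$ large. The only difference is organizational: you isolate (\ref{new-high-dim}) as a standalone step before the reduction, while the paper establishes it inline as estimate (\ref{estimate-3.7}).
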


We begin with a local $W^{2,2}$ estimate.

\begin{lemma}\label{lemma-3.2}
Under the same conditions on $\Omega$ and $u$ as in Theorem \ref{Theorem-2.1},
we have
\begin{equation}\label{estimate-3.2}
\int_{B(Q,r)\cap \Omega}
|\nabla^2 u|^2\, dx \le \frac{C}{r^2} 
\int_{B(Q,2r)\cap \Omega} |\nabla u|^2\, dx
\end{equation}
where $C$ depends only on $d$.
\end{lemma}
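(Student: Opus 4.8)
The target is the Caccioppoli-type inequality $\int_{B(Q,r)\cap\Omega}|\nabla^2 u|^2\,dx \le C r^{-2}\int_{B(Q,2r)\cap\Omega}|\nabla u|^2\,dx$ for a harmonic function $u$ whose normal derivative vanishes on $B(Q,3r)\cap\partial\Omega$ in a convex $C^2$ domain. The plan is to differentiate the equation, test against a cutoff, and integrate by parts, controlling the resulting boundary term by convexity exactly as in the proof of Lemma \ref{lemma-2.1}. Since $\Delta u=0$, each first derivative $w_k=\partial u/\partial x_k$ is also harmonic. I would fix a cutoff $\varphi\in C_0^\infty(B(Q,2r))$ with $\varphi=1$ on $B(Q,r)$ and $|\nabla\varphi|\le C/r$, and compute
\begin{equation}
\int_\Omega \varphi^2 |\nabla^2 u|^2\,dx
=\int_\Omega \varphi^2 \,\frac{\partial^2 u}{\partial x_i\partial x_j}\frac{\partial^2 u}{\partial x_i\partial x_j}\,dx,
\end{equation}
then integrate by parts in $x_j$ to move one derivative off $\nabla^2 u$ onto $\nabla u$ and onto $\varphi^2$.

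The key algebraic point is the same "$\mathrm{div}^2$ versus $\partial_i v_j\,\partial_j v_i$" identity used in Section 2. Applying the divergence theorem to $\partial_j(\varphi^2\,\partial_{ij}u\,\partial_i u)$ and using $\Delta u=0$ produces an interior term of the form $\int_\Omega \varphi^2\,\partial_{ij}u\,\partial_{ij}u$ balanced against $\int_\Omega \partial_{ij}u\,\partial_i u\,\partial_j(\varphi^2)$ plus a boundary integral on $\partial\Omega$. The boundary integral is the crux: after using the Neumann condition $\partial u/\partial n=0$ on the support of $\varphi$, the boundary term should reduce to an expression of the form $-\int_{\partial\Omega}\varphi^2\,\beta(\nabla_T u;\nabla_T u)\,d\sigma$ involving the second fundamental form $\beta$, which is nonnegative on a convex domain and hence has the favorable sign. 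This is precisely the mechanism already exploited via Grisvard \cite[p.137]{Grisvard} in Lemma \ref{lemma-2.1}, applied now to $\mathbf v=(\nabla u)\varphi$.

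I expect the main obstacle to be the boundary computation: one must verify that, after differentiating the equation and integrating by parts, the terms supported on $\partial\Omega$ assemble into the quadratic form $\beta(\mathbf v_T;\mathbf v_T)$ with the correct sign, rather than leaving an uncontrolled boundary contribution. Because $\varphi$ is supported in $B(Q,2r)$ where $\partial u/\partial n=0$, the normal component drops out and only the tangential part of $\nabla u$ survives, so convexity of $\Omega$ forces the sign. Once the boundary term is discarded (it only helps), the remaining interior cross term $\int_\Omega \partial_{ij}u\,\partial_i u\,\partial_j(\varphi^2)$ is handled by Cauchy's inequality with a small $\varepsilon$: bound it by $\varepsilon\int_\Omega\varphi^2|\nabla^2 u|^2 + C_\varepsilon\int_\Omega|\nabla\varphi|^2|\nabla u|^2$, absorb the first term into the left side, and use $|\nabla\varphi|\le C/r$ together with $\varphi\equiv 1$ on $B(Q,r)$ and $\mathrm{supp}(\varphi)\subset B(Q,2r)$ to conclude \eqref{estimate-3.2}.
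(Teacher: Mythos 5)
Your proof is correct and is essentially the argument the paper relies on: the paper's own ``proof'' of Lemma \ref{lemma-3.2} is just the citation to \cite[p.1826]{Kim-Shen}, and the proof there is exactly this cutoff Caccioppoli argument, discarding the boundary term $\int_{\partial\Omega}\varphi^2\, n_j\,\partial_{ij}u\,\partial_i u\, d\sigma=\int_{\partial\Omega}\varphi^2\,\beta(\nabla_T u;\nabla_T u)\, d\sigma\le 0$ by convexity (valid since $\frac{\partial u}{\partial n}=0$ on $B(Q,3r)\cap\partial\Omega\supset \mathrm{supp}(\varphi)\cap\partial\Omega$) and absorbing the cross term via Cauchy's inequality with $\varepsilon$.
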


\begin{proof}
See e.g. \cite[p.1826]{Kim-Shen}.
\end{proof}

Let $\delta(x)=\text{dist}(x, \partial\Omega)$.

\begin{lemma}\label{lemma-3.3}
Let $w$ be a harmonic function in a bounded Lipschitz domain $\Omega$.
Let $p>2$. Fix $x_0\in \Omega$ such that $\delta(x_0)\ge c_0 \text{diam}(\Omega)$.
Then for any $t\in (0,1)$,
\begin{equation}\label{estimate-3.3}
\aligned
\int_{\partial\Omega}
|(\nabla w)^*|^p\, d\sigma \le 
&
C \big\{ \text{diam} (\Omega)\big\}^t \sup_{x\in \Omega}
|\nabla^2 w (x)|^{p-2} [\delta (x)]^{p-1-t}
\int_\Omega |\nabla^2 w|^2\, dy\\
&\qquad + C |\nabla w(x_0)|^p |\partial\Omega|,
\endaligned
\end{equation}
where $C$ depends only on
$d$, $p$, $t$, $c_0$ and the Lipschitz character of $\Omega$.
\end{lemma}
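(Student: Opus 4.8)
The plan is to estimate the nontangential maximal function $(\nabla w)^*$ pointwise on $\partial\Omega$ by exploiting the fact that $\nabla w$ is itself harmonic, hence controlled along nontangential rays by its gradient $\nabla^2 w$ weighted against the distance to the boundary. First I would fix $P\in\partial\Omega$ and a point $x$ in the nontangential approach region $\gamma(P)$. Connecting $x$ to the fixed interior point $x_0$ by a path that stays in $\Omega$ and whose distance-to-boundary profile is comparable to the natural one (a ``carrot'' or Harnack chain joining $x$ to $x_0$), I would write $\nabla w(x)-\nabla w(x_0)$ as a line integral of $\nabla^2 w$ along this path and bound it by $\int_0^{\ell}|\nabla^2 w(\gamma(s))|\,ds$. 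The key point is that along such a path the distance $\delta(\gamma(s))$ grows linearly away from the boundary, so $|\nabla^2 w(\gamma(s))|$ can be controlled using the supremum factor in~(\ref{estimate-3.3}); this yields a pointwise bound of the form $(\nabla w)^*(P)\le |\nabla w(x_0)| + C\int_{\Gamma_P}|\nabla^2 w|\,\delta^{-?}$ along a cone over $P$.

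Next I would convert this pointwise bound into the $L^p(\partial\Omega)$ estimate. The natural device is to dominate $(\nabla w)^*$ by (a constant multiple of $|\nabla w(x_0)|$ plus) an integral operator applied to $|\nabla^2 w|$, and then raise to the $p$-th power and integrate over $\partial\Omega$. The crucial algebraic step is to split the integrand as
\begin{equation}\label{splitting}
|\nabla^2 w|^p = |\nabla^2 w|^{p-2}\cdot |\nabla^2 w|^2,
\end{equation}
pulling the first factor out via the supremum $\sup_x |\nabla^2 w(x)|^{p-2}[\delta(x)]^{p-1-t}$ and keeping $|\nabla^2 w|^2$ inside to match the $\int_\Omega|\nabla^2 w|^2$ on the right-hand side. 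The exponent bookkeeping is arranged precisely so that after factoring out the supremum, the remaining weight in $\delta(x)$ is integrable; the leftover power of $\delta$ together with the geometry of the cones produces the $\{\text{diam}(\Omega)\}^t$ prefactor, which is exactly where the parameter $t\in(0,1)$ enters and must be kept strictly positive to ensure convergence of the distance-weighted integral near $\partial\Omega$.

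Carrying out the passage from the pointwise cone integral to the solid integral $\int_\Omega$ requires a Fubini-type argument: one integrates the contribution of each $y\in\Omega$ over all boundary points $P$ whose cone $\gamma(P)$ contains $y$, and the measure of that set of boundary points is comparable to $[\delta(y)]^{d-1}$, up to constants depending on the Lipschitz character. I expect the main obstacle to be making this Fubini/change-of-variables step rigorous with the correct weights, i.e. verifying that after integrating in $P$ and using~(\ref{splitting}) the powers of $\delta(y)$ combine to give exactly the weight appearing in the supremum factor of~(\ref{estimate-3.3}) with no residual negative power that would destroy integrability. The term $C|\nabla w(x_0)|^p|\partial\Omega|$ simply accounts for the contribution of the base point $\nabla w(x_0)$ in the line-integral representation and is harmless. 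A cleaner alternative, which I would pursue if the direct cone estimate becomes technical, is to reduce to the model case of a Lipschitz graph domain via a partition of unity and the standard localization of the nontangential maximal function, handling each coordinate patch by the same distance-weighted integral estimate and then summing.
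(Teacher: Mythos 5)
Your plan is correct in substance, but it takes a genuinely different route from the one the paper relies on. The paper's ``proof'' is a citation to Kim--Shen (p.~1827), and there the first step is Dahlberg's square function estimate for harmonic functions in Lipschitz domains,
$\int_{\partial\Omega}|(\nabla w)^*|^p\,d\sigma\le C\int_{\partial\Omega}|S(\nabla w)|^p\,d\sigma+C|\nabla w(x_0)|^p|\partial\Omega|$ with $S(\nabla w)(Q)^2=\int_{\gamma(Q)}|\nabla^2 w|^2\,\delta^{2-d}\,dy$; after that one applies H\"older on the cone, pulls out the supremum via the splitting $|\nabla^2 w|^p=|\nabla^2 w|^{p-2}\cdot|\nabla^2 w|^2$, and uses Fubini with $\sigma\{Q:\,y\in\gamma(Q)\}\le C[\delta(y)]^{d-1}$ --- exactly the second half of your plan. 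You replace the (deep) square function theorem by the elementary pointwise bound
$(\nabla w)^*(P)\le|\nabla w(x_0)|+C\int_{\Gamma_P}|\nabla^2 w|\,\delta^{1-d}\,dy$,
obtained from a Harnack chain together with the sub--mean-value property of $|\nabla^2 w|$. This is lossier than Dahlberg's estimate, but the loss is absorbed by the supremum factor and the exponents do close up: applying H\"older with respect to the measure $\delta^{\varepsilon-d}dy$, whose mass on the cone is $\lesssim\varepsilon^{-1}[\mathrm{diam}(\Omega)]^{\varepsilon}$, with $\varepsilon=t/(p-1)$, gives
$A(P)^p\le C[\mathrm{diam}(\Omega)]^{\varepsilon(p-1)}\int_{\Gamma_P}|\nabla^2 w|^p\,\delta^{(1-\varepsilon)p+\varepsilon-d}\,dy$, and after Fubini the exponent of $\delta$ becomes $(1-\varepsilon)(p-1)=p-1-t$, which is precisely the weight in the supremum of (\ref{estimate-3.3}). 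So your argument is more elementary and self-contained, while the paper's inherits the strength of the square function estimate for free. Two points need care in the write-up: (i) your remark that $|\nabla^2 w(\gamma(s))|$ can be ``controlled using the supremum factor'' inside the line integral is a dead end, since $\delta^{-(p-1-t)/(p-2)}$ is not integrable along the ray when $t<1$; you must keep $|\nabla^2 w|$ intact in the cone integral and invoke the supremum only after the $|\nabla^2 w|^{p-2}\cdot|\nabla^2 w|^2$ splitting, as your second paragraph correctly prescribes; (ii) the passage from $A(P)^p$ to a single cone integral is not automatic --- it is exactly the H\"older step above, and it is there (not merely in ``the geometry of the cones'') that the restriction $t>0$ and the factor $[\mathrm{diam}(\Omega)]^t$ enter.
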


\begin{proof} See e.g. \cite[p.1827]{Kim-Shen}.
\end{proof}

\noindent{\bf Proof of Theorem \ref{Theorem-3.1}.}

Since $\Omega$ is a Lipschitz domain, 
by rotation and translation, we may assume that $Q=0$ and
$$
B(Q, C_0 r_0)\cap\Omega
=\big\{ (x^\prime, x_d):\ x_d>\psi (x^\prime)\big\}\cap B(Q, C_0r_0)
$$
where $\psi: \mathbb{R}^{d-1}\to \mathbb{R}$ such that $\psi(0)=0$ and $\|\nabla \psi\|_\infty
\le M$, and $C_0 =10\sqrt{d} (1+M)$.
Let 
$$
S(r)= \big\{ (x^\prime, \psi(x^\prime)): |x^\prime|<r\big \}.
$$
We will show that if $u\in C^2(\overline{\Omega})$ is harmonic in $\Omega$
and $\frac{\partial u}{\partial n}=0$ on $S(8r)$, then
\begin{equation}\label{estimate-3.4}
\left\{ \frac{1}{r^{d-1}}
\int_{S(r)} 
|(\nabla u)^*|^p\, d\sigma \right\}^{1/p}
\le C 
\left\{ \frac{1}{r^{d-1}}\int_{S(4r)} 
|(\nabla u)^*|^2 \, d\sigma \right\}^{1/2},
\end{equation}
where $C$ depends only on $d$, $p$ and $M$.
Estimate (\ref{estimate-3.1}) follows from (\ref{estimate-3.4})
by a simple covering argument.

For $P\in \partial\Omega$, define
\begin{equation}
\aligned
\mathcal{M}_1 (\nabla u) (P) & =\sup\big\{ |\nabla u(x)|: x\in \Omega, 
|x-P|<C_0\delta (x) \text{ and } |x-P|\le c_0 r\big \},\\
\mathcal{M}_2 (\nabla u) (P) & =\sup\big\{ |\nabla u(x)|: x\in \Omega, 
|x-P|<C_0\delta (x) \text{ and } |x-P|> c_0 r\big\}
\endaligned
\end{equation}
Note that $(\nabla u)^*=\max \{ \mathcal{M}_1 (\nabla u), \mathcal{M}_2 (\nabla u)\}$.
The desired estimate for $\mathcal{M}_2 (\nabla u)$ follows readily from the 
interior estimates for harmonic functions.
To handle $\mathcal{M}_1(\nabla u)$, we apply Lemma \ref{lemma-3.3} to $u$ on the Lipschitz
domain $Z(2r)$, where
$$
Z (\rho) (=\big\{ (x^\prime, x_d): |x^\prime|<\rho \text{ and }
\psi(x^\prime)< x_d < 20\sqrt{d} (1+M)\rho \big\}.
$$
This yields that
\begin{equation}\label{estimate-3.5}
\aligned
\frac{1}{r^{d-1}}\int_{S(r)} |\mathcal{M}_1 (\nabla u)|^p\, d\sigma
& \le \frac{1}{r^{d-1}} \int_{\partial Z(2r)} |(\nabla u)^*_{Z(2r)}|^p\, d\sigma\\
& \le C r^{t-d+1} \sup_{Z(2r)} |\nabla^2 u(x)|^{p-2} \big[ \delta (x)\big]^{p-1-t}
\int_{Z(2r)} |\nabla^2 u(y)|^2\,dy\\
&\qquad +C|\nabla v(x_0)|^p,
\endaligned
\end{equation}
where $\delta(x)=\text{dist}(x, Z(2r))$ and
$(\nabla u)^*_{Z(2r)}$ denotes the nontangential maximal function of
$\nabla u$ with respect to the domain $Z(2r)$.
Note that the last term in the right-hand side of (\ref{estimate-3.5})
may be treated easily, using the interior estimates.

Let $I$ denote the first term in the right-hand side of (\ref{estimate-3.5}).
By Lemma \ref{lemma-3.2},
\begin{equation}\label{estimate-3.6}
I\le Cr^{t-d-1}
\sup_{Z(2r)} |\nabla^2 u(x)|^{p-2} \big[ \delta (x)\big]^{p-1-t}
\int_{Z(2r)} |\nabla u(y)|^2\,dy.
\end{equation}
Let $x\in Z(2r)$. It follows from the interior estimates that
for any $q>2$,
\begin{equation}\label{estimate-3.7}
\aligned
|\nabla^2 u(x)| &
\le \frac{C}{\delta (x)}
\left \{ \frac{1}{[\delta (x)]^d} \int_{B(x, \delta(x))} 
|\nabla u|^q \, dx \right\}^{1/q}\\
& \le \frac{C r^{\frac{d}{q}} }{[\delta (x)]^{1+\frac{d}{q}}}
\left\{ \frac{1}{r^d}
\int_{Z(2r)} |\nabla u|^q \, dx \right\}^{1/q}\\
& \le 
\frac{C r^{\frac{d}{q}}}{[\delta (x)]^{1+\frac{d}{q}}}
\left\{ \frac{1}{r^d}
\int_{Z(4r)} |\nabla u|^2 \, dx \right\}^{1/2},
\endaligned
\end{equation}
where we have used estimate (\ref{estimate-2.1}) in the last step. This, together with (\ref{estimate-3.6}),
implies that
\begin{equation}\label{estimate-3.8}
I\le C r^{t-1 +\frac{d}{q}(p-2)}
\sup_{x\in Z(2r)}
\big[ \delta (x)\big]^{p-1-t -(1+\frac{d}{q})(p-2)}
\left\{ \frac{1}{r^d}
\int_{Z(4r)}
|\nabla u|^2\, dy\right\}^{p/2}.
\end{equation}
Since $p-1-t -(1+\frac{d}{q})(p-2)
=1-t -\frac{d}{q}(p-2)$, we may choose $q>2$ so large that 
the exponent of $\delta (x)$ in (\ref{estimate-3.8}) is positive.
Using $\delta (x)\le Cr$, we then obtain
\begin{equation}\label{estimate-3.9}
I\le 
C \left \{ \frac{1}{r^d}
\int_{Z(4r)}
|\nabla u|^2\, dy\right\}^{p/2}
\le C \left\{ \frac{1}{r^{d-1}}
\int_{S(4r)}
|(\nabla u)^*|^2\, d\sigma \right\}^{p/2}.
\end{equation}
Thus we have proved that
$$
\frac{1}{r^{d-1}}
\int_{S(r)}
|\mathcal{M}_1 (\nabla u)|^p\, d\sigma
\le C \left\{ \frac{1}{r^{d-1}}
\int_{S(4r)}
|(\nabla u)^*|^2\, d\sigma \right\}^{p/2}.
$$
This, together with the same estimate for $\mathcal{M}_2(\nabla u)$,
gives (\ref{estimate-3.4}).
\qed

\begin{remark}\label{remark-3.1}
{\rm 
Let $p>2$.
It follows from Theorem \ref{Theorem-3.1} and \cite[Theorem 1.1]{Kim-Shen}
 that if $\Omega$ is a bounded convex domain with $C^2$ boundary,
the $L^p$ Neumann problem for $\Delta u=0$ in $\Omega$ is uniquely solvable.
Moreover, the solution satisfies the estimate
$\|(\nabla u)^*\|_p \le C\| \frac{\partial u}{\partial n}\|_p$,
where $C$ depends only on $d$, $p$ and the Lipschitz character of $\Omega$.
}
\end{remark}

\section{Proof of Theorem \ref{Main-Theorem}}

Let $\Omega$ be a bounded convex domain in $\mathbb{R}^d$, $d\ge 2$. Let $p>2$.
We need to show that the $L^p$ Neumann problem for $\Delta u=0$ in $\Omega$
is uniquely solvable. Since the case $d=2$ is contained in \cite{Kim-Shen},
we will assume that $d\ge 3$.

The uniqueness of the $L^p$ Neumann problem follows directly from the uniqueness
of the $L^2$ Neumann problem.
To establish the existence, it suffices to show that if $f\in C^\infty_0 (\mathbb{R}^d)$
and $\int_{\partial\Omega} f\, d\sigma =0$, then the solution of
the $L^2$ Neumann problem for $\Delta u=0$ in $\Omega$ with boundary data $f|_{\partial\Omega}$
satisfies $\| (\nabla u)^*\|_p \le C \|f\|_p$.

To this end we approximate $\Omega$ from the outside
by a sequence of convex domains $\{ \Omega_j\}$
with smooth boundaries and uniform Lipschitz characters.
Let $u_j$ be a solution to the $L^2$ Neumann problem for Laplace's equation
in $\Omega_j$ with data $f_j -\alpha_j$, where $\alpha_j$ is the mean value of $f$
on $\partial\Omega_j$.
It follows from Remark \ref{remark-3.1} that
\begin{equation}\label{estimate-4.1}
\|(\nabla u_j)^*\|_{L^p(\partial\Omega_j)}
\le C\| f_j -\alpha_j \|_{L^p(\partial\Omega_j)},
\end{equation}
where $C$ depends only on $d$, $p$ and the Lipschitz character of $\Omega$.
By a limiting argument (see e.g. \cite{JK-1981}),
there exists a subsequence, still denoted by $\{ u_j\}$, 
such that $\nabla u_j \to \nabla v$ uniformly
on any compact subset of $\Omega$, where $v$ is a variational solution
of the Neumann problem in $\Omega$ with data $f|_{\partial\Omega}$.
Using this and (\ref{estimate-4.1}), we may deduce that
 $\|(\nabla v)^*\|_{L^p(\partial\Omega)}
\le C\| f\|_{L^p(\partial\Omega)}$.
Since $u-v$ is constant by the uniqueness of the 
variational solutions, we obtain $\|(\nabla u)^*\|_{L^p(\partial\Omega)}
\le C\| f\|_{L^p(\partial\Omega)}$.
This completes the proof.

\section{Proof of Theorem \ref{cor-1}}

To establish the existence, we first reduce the problem to the case where $f=0$.
The argument is standard. Let $f\in L^p_{-1,0}(\Omega)$ and
$w=\Pi_\Omega (f) =:\mathcal{R}_\Omega \Pi (\widetilde{f})$.
Here $\mathcal{R}_\Omega$ denotes the operator restricting distributions
in $\mathbb{R}^d$ to $\Omega$, the map $\Pi: \mathcal{E}^\prime (\mathbb{R}^d) \to
\mathcal{D}^\prime (\mathbb{R}^d)$ is given by the convolution
with the fundamental solution for $\Delta$ in $\mathbb{R}^d$ with pole at the origin,
and $\widetilde{f}$ is defined by $<\widetilde{f}, \phi>
=<f, \mathcal{R}_\Omega (\phi)>$ for $\phi \in C^\infty (\mathbb{R}^d)$.
Let $q=\frac{p}{p-1}$.
For any $\phi \in B_{s}^q (\partial\Omega)$ with $s=\frac{1}{p}$, define 
\begin{equation}
<\Lambda (f), \phi> =\int_\Omega \nabla w\cdot \nabla \psi \, dx
+< f, \psi> _{L^p_{-1,0}(\Omega)\times
L^q_1(\Omega)},
\end{equation}
where
$\psi$ is a function in $L^q_1(\Omega)$ such that $Tr(\psi)=\phi$ and 
$\| \psi\|_{L^q_1(\Omega)} \le C \| \phi\|_{B^q_{1/p}(\partial\Omega)}$.
Here we have used the fact that the trace operator
$Tr: L^q_1(\Omega)\to B^q_{1/p}(\partial\Omega)$
is bounded and onto and that
$$
\|\phi\|_{B^q_{1/p}(\partial\Omega)}
\approx
\inf \big\{
\|\psi\|_{L^q_1(\Omega)}: \
Tr(\psi)=\phi\big\}.
$$
Since
$$
\int_\Omega \nabla w\cdot \nabla \psi\, dx +<f, \psi>=0
\qquad \text{ for any } \psi \in C_0^\infty (\Omega)
$$
and $C_0^\infty(\Omega)$ is dense in $\{ u\in L^q_1(\Omega): Tr(u)=0\}$,
it is easy to see that $<\Lambda (f), \phi>$ is well defined. Furthermore,
\begin{equation}
\aligned
|<\Lambda(f), \phi>| &\le
\big\{ \| w\|_{L^p_1(\Omega)} +\| f\|_{L^p_{-1,0}(\Omega)} \big\} \| \psi \|_{L^q_1(\Omega)}\\
&\le C \| f\|_{L^p_{-1,0}(\Omega)}  \| \psi \|_{L^q_1(\Omega)}\\
&\le C\| f\|_{L^p_{-1,0}(\Omega)} \| \phi\|_{B^q_{1/p}(\partial\Omega)},
\endaligned
\end{equation}
where we have used the Calder\'on-Zygmund
estimate $\|w\|_{L^p_1(\Omega)} \le C\| f\|_{L^p_{-1,0}(\Omega)}$.
It follows that $w$ is a weak solution to (\ref{Poisson-problem}) with data $(f, \Lambda(f))$
and
$\| \Lambda (f)\|_{B^p_{-1/p}(\partial\Omega)} \le C \| f\|_{L^p_{-1,0}(\Omega)}$.
Thus, by subtracting $w$ from $u$, we may always assume that $f=0$.

Next, we note that if $\Omega$ is a bounded Lipschitz domain,
the solvability of the Neumann problem, 
\begin{equation}\label{B-Neumann-problem}
\left\{
\aligned
&\Delta u=0 \text{ in } \Omega,\\
&\frac{\partial u}{\partial n}=g \in B_{-s}^p (\partial\Omega) \text{ on }\partial\Omega,\\
& u\in L^p_{1-s+1/p}(\Omega),
\endaligned
\right.
\end{equation}
was established in \cite{FMM-1998} for $(s, 1/p)$
in the (open) convex polygon $\mathcal{P}$ formed by the vertices, 
$$
(1-\varep, 0),\ 
(1,0), \
(1, (1+\varep)/2),\
(\varep , 1),\
(0,1),\
(0, (1-\varep)/2),
$$ 
where
$\varep>0$ depends on $\Omega$. By interpolation,
this, together with Theorem \ref{Main-Theorem},
implies that the Neumann problem (\ref{B-Neumann-problem})
in a convex domain
is uniquely solvable  if $(s,1/p)$ is the (open) 
convex  polygon $\mathcal{P}_1$ formed by the
vertices
$$
(1-\varep, 0), \
(1,0), \
(1, (1+\varep)/2), \
(\varep, 1),\
(0,1),\
(0,0).
$$
In particular, the Neumann problem (\ref{B-Neumann-problem}) is solvable if $s=1/p$ and $2\le p<\infty$.
As a result, we have proved Theorem \ref{cor-1} for $2\le p<\infty$.
The case $1<p<2$ will be proved by a duality argument, given in the next section
(see Remark \ref{Remark-6.3}).

\section{Proof of Theorem \ref{cor-2}}

Note that
\begin{equation}
\aligned
L^p_\sigma (\Omega)  & =\big\{ \mathbf{u}\in L^p(\Omega, \mathbb{R}^d): \
\int_\Omega \mathbf{u}\cdot \nabla \psi \, dx = 0
\text{ for any } \psi\in W^{1,q}(\Omega)\big\}\\
& =\big\{ \mathbf{u}\in L^p(\Omega, \mathbb{R}^d): \
\text{div}(\mathbf{u})=0 \text{ in }\Omega \text{ and }
\mathbf{u}\cdot n=0 \text{ on } \partial\Omega\big\},
\endaligned
\end{equation}
where $q=\frac{p}{p-1}$. Here $\mathbf{u}\cdot n$ is regarded as an element in $B^p_{-1/p}
(\partial\Omega)$.
Let $X$ be a normed vector space and $S$ a subset of $X$.
The set
$$
S^\perp =\{ \ell \in X^*: \ <\ell, f>=0 \text{ for all } f\in S\, \}
$$
 is called the set of annihilators of $S$.
If $S$ is a closed subspace of
a reflexive Banach space $X$, then $(S^\perp)^\perp=S$
(see e.g. \cite{Schechter}).
With this notation, we may write $L^p_\sigma (\Omega)
=S^\perp\subset X= L^p(\Omega, \mathbb{R}^d)$,
 where $S=\text{grad } W^{1,q}(\Omega)\subset L^q(\Omega, \mathbb{R}^d)$.
Thus the $L^p$-Helmholtz decomposition (\ref{H-decomposition}) may be written as
\begin{equation}\label{H-decomposition-1}
L^p(\Omega, \mathbb{R}^d)
=\text{grad } W^{1,p}(\Omega) \oplus \big\{  \text{grad } W^{1,q}(\Omega)\big\}^\perp.
\end{equation}

\begin{lemma}\label{lemma-6.1}
Let $\Omega$ be a bounded Lipschitz domain in $\mathbb{R}^d$ and $1<p<\infty$.
Then
$C_\sigma^\infty(\Omega)
=\{ \mathbf{v}\in C_0^\infty(\Omega, \mathbb{R}^d): \ \text{\rm div}(\mathbf{v})=0 \}$
is dense in $L^p_\sigma (\Omega)$.
\end{lemma}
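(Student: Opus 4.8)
The plan is to establish the density by a Hahn--Banach duality argument, reducing the whole statement to the classical de Rham characterization of gradients. First I would record the trivial inclusion $C_\sigma^\infty(\Omega)\subseteq L^p_\sigma(\Omega)$: if $\mathbf{v}\in C_0^\infty(\Omega,\mathbb{R}^d)$ with $\text{div}(\mathbf{v})=0$, then for every $\psi\in W^{1,q}(\Omega)$ one has $\int_\Omega \mathbf{v}\cdot\nabla\psi\,dx=-\int_\Omega \psi\,\text{div}(\mathbf{v})\,dx=0$, the boundary term vanishing by compact support. Let $M$ denote the closure of $C_\sigma^\infty(\Omega)$ in $L^p(\Omega,\mathbb{R}^d)$; then $M\subseteq L^p_\sigma(\Omega)$, and it remains to prove the reverse inclusion.

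Suppose, for contradiction, that $M\subsetneq L^p_\sigma(\Omega)$. Since $M$ is a proper closed subspace of $L^p$, the Hahn--Banach theorem produces a functional, represented via $(L^p)^*=L^q$ by some $\mathbf{w}\in L^q(\Omega,\mathbb{R}^d)$, such that $\int_\Omega \mathbf{w}\cdot\mathbf{v}\,dx=0$ for all $\mathbf{v}\in M$ while $\int_\Omega \mathbf{w}\cdot\mathbf{u}_0\,dx\neq 0$ for some $\mathbf{u}_0\in L^p_\sigma(\Omega)$. In particular $\mathbf{w}$ annihilates $C_\sigma^\infty(\Omega)$.

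The crux is then to show that any $\mathbf{w}\in L^q(\Omega,\mathbb{R}^d)$ with $\int_\Omega \mathbf{w}\cdot\mathbf{v}\,dx=0$ for all $\mathbf{v}\in C_\sigma^\infty(\Omega)$ is a gradient, $\mathbf{w}=\nabla\pi$ with $\pi\in W^{1,q}(\Omega)$, i.e. $\mathbf{w}\in S:=\text{grad }W^{1,q}(\Omega)$. This is exactly the de Rham lemma on bounded Lipschitz domains: orthogonality to all compactly supported divergence-free test fields forces $\mathbf{w}$ to be a distributional gradient $\nabla\pi$, and the Lipschitz structure (through Ne\v{c}as's inequality, equivalently the existence of a bounded right inverse of the divergence, the Bogovski\u{\i} operator) upgrades this to $\pi\in L^q(\Omega)$; since $\nabla\pi=\mathbf{w}\in L^q$ this gives $\pi\in W^{1,q}(\Omega)$. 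I expect this to be the main obstacle, and I would invoke it as a known result (Temam, Girault--Raviart, Ne\v{c}as) rather than reprove it, as it carries all the geometric content.

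Granting $\mathbf{w}\in S$, the contradiction is immediate: by the identity $L^p_\sigma(\Omega)=S^\perp$ recorded above, we get $\int_\Omega \mathbf{w}\cdot\mathbf{u}\,dx=0$ for every $\mathbf{u}\in L^p_\sigma(\Omega)$, contradicting $\int_\Omega \mathbf{w}\cdot\mathbf{u}_0\,dx\neq 0$. Hence $M=L^p_\sigma(\Omega)$, which is the claimed density. Everything beyond the de Rham step is formal duality combined with the characterization $L^p_\sigma(\Omega)=(\text{grad }W^{1,q}(\Omega))^\perp$ already established in the text.
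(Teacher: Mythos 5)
Your proposal is correct and is essentially the paper's own argument: both reduce the density claim to the de Rham-type lemma that an $L^q$ field annihilating all of $C_\sigma^\infty(\Omega)$ is the gradient of a $W^{1,q}(\Omega)$ function, and both close the argument by duality (the paper via the double-annihilator identity $(S^\perp)^\perp=S$ for closed subspaces of reflexive spaces, you via a direct Hahn--Banach separation, which is the same mechanism). No gaps.
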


\begin{proof}
Let $X^p(\Omega)$ denote the closure of $C_\sigma^\infty(\Omega)$
in $L^p(\Omega, \mathbb{R}^d)$.
Clearly, $\text{grad } W^{1,q}(\Omega)\subset (X^p(\Omega))^\perp$.
On the other hand, if $\mathbf{u}\in L^q(\Omega, \mathbb{R}^d)$ and
$\int_\Omega \mathbf{u}\cdot \mathbf{v}\, dx=0$
for any $\mathbf{v}\in C_\sigma^\infty(\Omega)$,
then $\mathbf{u}=-\nabla \psi$ for some $\psi \in L_{loc}^1(\Omega)$
(see e.g. \cite[pp.696-697]{FuMo} for a proof).
This implies that $\mathbf{u}\in \text{grad } W^{1,q}(\Omega)$.
Hence we obtain grad $W^{1,q}(\Omega)= (X^p(\Omega))^\perp$.
It follows that $X^p(\Omega)= (\text{grad } W^{1,q}(\Omega))^\perp
=L^p_\sigma (\Omega)$
and thus $C_\sigma^\infty(\Omega)$ is dense in $L^p_\sigma (\Omega)$.
\end{proof}

\begin{lemma}\label{lemma-6.2}
Let $\Omega$ be a bounded Lipschitz domain in $\mathbb{R}^d$
and $1<p<\infty$. If the Helmholtz decomposition (\ref{H-decomposition}) with the estimate
(\ref{H-estimate}) holds 
for the exponent $p$ and constant $C_p$, then it holds
for the dual exponent $q=\frac{p}{p-1}$ and constant $C_q=C_p$.
\end{lemma}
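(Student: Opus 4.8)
The statement to prove is Lemma \ref{lemma-6.2}: a duality result saying that if the Helmholtz decomposition holds for exponent $p$ with constant $C_p$, then it holds for the dual exponent $q = p/(p-1)$ with the same constant.

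The plan is to exploit the fact, already recorded in the discussion preceding Lemma \ref{lemma-6.1}, that the two summands in the Helmholtz decomposition are mutual annihilators: we have $L^p_\sigma(\Omega) = \{\text{grad } W^{1,q}(\Omega)\}^\perp$ and, symmetrically, $L^q_\sigma(\Omega) = \{\text{grad } W^{1,p}(\Omega)\}^\perp$. First I would observe that the existence of a direct sum decomposition $L^p(\Omega,\mathbb{R}^d) = \text{grad } W^{1,p}(\Omega) \oplus L^p_\sigma(\Omega)$ satisfying the norm bound \eqref{H-estimate} is equivalent to the statement that the projection $P_p$ onto the first factor (along the second) is a bounded operator on $L^p(\Omega,\mathbb{R}^d)$ with $\|P_p\| \le C_p$ (and likewise for the complementary projection $I - P_p$). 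So I would recast the hypothesis as: the bounded projection $P_p$ exists with operator norm controlled by $C_p$.

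Next I would pass to the adjoint. The adjoint $P_p^*$ is a bounded operator on the dual space $L^q(\Omega,\mathbb{R}^d)$ with $\|P_p^*\| = \|P_p\| \le C_p$, and since $P_p$ is a projection ($P_p^2 = P_p$), so is $P_p^*$. The key step is to identify the range and kernel of $P_p^*$ correctly. Because $P_p$ projects onto $\text{grad } W^{1,p}(\Omega)$ with kernel $L^p_\sigma(\Omega) = \{\text{grad } W^{1,q}(\Omega)\}^\perp$, general duality for projections gives that the range of $I - P_p^*$ (equivalently the kernel of $P_p^*$) is the annihilator of the range of $P_p$, namely $\{\text{grad } W^{1,p}(\Omega)\}^\perp = L^q_\sigma(\Omega)$, while the range of $P_p^*$ is the annihilator of the kernel of $P_p$, which is $(\{\text{grad } W^{1,q}(\Omega)\}^\perp)^\perp = \text{grad } W^{1,q}(\Omega)$ by reflexivity. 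Thus $P_p^*$ is exactly a bounded projection of $L^q(\Omega,\mathbb{R}^d)$ onto $\text{grad } W^{1,q}(\Omega)$ along $L^q_\sigma(\Omega)$, which yields the decomposition \eqref{H-decomposition} for the exponent $q$ together with the estimate \eqref{H-estimate} and constant $C_q = \|P_p^*\| = C_p$.

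The main obstacle, and the point requiring the most care, is the precise bookkeeping of annihilators: one must verify that the range and kernel of $P_p$ are \emph{closed} subspaces (automatic, since they are the range and kernel of a bounded projection), apply the biduality identity $(S^\perp)^\perp = S$ for closed subspaces of the reflexive space $L^p(\Omega,\mathbb{R}^d)$ (as invoked just before Lemma \ref{lemma-6.1}), and confirm that the adjoint of a bounded projection is again a projection whose range is the annihilator of the kernel of the original. Once these standard but easily-misstated duality facts are assembled, the conclusion is immediate and the constant is genuinely preserved, since adjoint operators have equal operator norm.
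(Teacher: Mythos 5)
Your proposal is correct and follows essentially the same route as the paper: the authors invoke the abstract fact that $X=X_0\oplus X_1$ implies $X^*=X_0^\perp\oplus X_1^\perp$ together with the annihilator identifications $L^p_\sigma(\Omega)=\{\text{grad }W^{1,q}(\Omega)\}^\perp$ and the biduality $(S^\perp)^\perp=S$, which is precisely the adjoint-projection argument you spell out. The only difference is presentational --- the paper records two explicit pairing identities to make the norm duality concrete, whereas you phrase the preservation of the constant via $\|P_p^*\|=\|P_p\|$ --- so there is nothing to fix.
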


\begin{proof} This follows from the fact that if $X_0$, $X_1$ are closed subspaces of $X$
and $X=X_0\oplus X_1$, then
$X^*=X_0^\perp \oplus X_1^\perp$.
Note that if $\mathbf{u}\in L^p_\sigma(\Omega)$, $\mathbf{v}\in L^q_\sigma(\Omega)$,
$\phi\in W^{1,p}(\Omega)$ and $\psi\in W^{1,q}(\Omega)$, then
\begin{equation}
\aligned
\int_\Omega \mathbf{u} \cdot (\mathbf{v}+\nabla \psi)\, dx  & =\int_\Omega (\mathbf{u} 
+\nabla \phi)\cdot \mathbf{v}\, dx,\\
\int_\Omega \nabla \phi \cdot (\mathbf{v} +\nabla \psi)\, dx
 & =\int_\Omega (\mathbf{u} +\nabla \phi) \cdot \nabla \psi\, dx.
\endaligned
\end{equation}
By a simple duality argument, this shows that the estimate (\ref{H-estimate}) holds for
the exponent $q$ and constant $C_q=C_p$.
\end{proof}

Next we will show that the $L^p$-Helmholtz decomposition is equivalent to the solvability of 
(\ref{B-Neumann-problem}) for $s=1/p$.

\begin{thm}\label{equiv-theorem}
Let $\Omega$ be a bounded Lipschitz domain in $\mathbb{R}^d$ and $1<p<\infty$.
Then the $L^p$-Helmholtz decomposition (\ref{H-decomposition}) with the estimate 
(\ref{H-estimate}) holds
if and only if the Neumann problem (\ref{B-Neumann-problem}) is
uniquely solvable for $s=1/p$.
\end{thm}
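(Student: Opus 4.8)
The plan is to prove the equivalence by establishing each direction through the identification $L^p_\sigma(\Omega) = \{\text{grad } W^{1,q}(\Omega)\}^\perp$ recorded in (\ref{H-decomposition-1}), viewing the decomposition as a question about whether the projection onto $\text{grad } W^{1,p}(\Omega)$ is a bounded operator. First I would observe that, by Lemma \ref{lemma-6.2}, it suffices to treat a single exponent $p$, since the Helmholtz decomposition for $p$ is equivalent to the one for the dual exponent $q=\frac{p}{p-1}$; this symmetry will let me reduce technical casework.

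For the forward direction, assume (\ref{H-decomposition})--(\ref{H-estimate}) hold for the exponent $p$. Given Neumann data $g\in B^p_{-1/p}(\partial\Omega)$ with the compatibility condition $\langle g,1\rangle=0$, I would first produce a vector field $\mathbf{u}\in L^p(\Omega,\mathbb{R}^d)$ whose normal trace (in the $B^p_{-1/p}(\partial\Omega)$ sense) realizes $g$ and whose divergence vanishes as a distribution; the trace operator being bounded and onto $B^q_{1/p}(\partial\Omega)$ gives enough room to build such a field with $\|\mathbf{u}\|_{L^p(\Omega)}\le C\|g\|_{B^p_{-1/p}(\partial\Omega)}$. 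Decomposing $\mathbf{u}=\nabla\phi+\mathbf{v}$ with $\phi\in W^{1,p}(\Omega)$ and $\mathbf{v}\in L^p_\sigma(\Omega)$, the potential $u=\phi$ should solve (\ref{B-Neumann-problem}) for $s=1/p$: testing the defining relation $\int_\Omega \nabla\phi\cdot\nabla\psi\,dx = \int_\Omega \mathbf{u}\cdot\nabla\psi\,dx - \int_\Omega \mathbf{v}\cdot\nabla\psi\,dx$ against $\psi\in W^{1,q}(\Omega)$, the $\mathbf{v}$ term drops by the definition of $L^p_\sigma(\Omega)$, and the $\mathbf{u}$ term reproduces $\langle g, Tr(\psi)\rangle$ after integrating by parts against the divergence-free field. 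The estimate (\ref{H-estimate}) then yields $\|\nabla u\|_{L^p(\Omega)}\le C\|g\|_{B^p_{-1/p}(\partial\Omega)}$, and uniqueness up to constants follows because a solution with $g=0$ gives a harmonic $u$ with vanishing Neumann data, hence $\nabla u\in L^p_\sigma(\Omega)\cap \text{grad }W^{1,p}(\Omega)=\{0\}$ by the directness of the sum.

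For the converse, assume (\ref{B-Neumann-problem}) is uniquely solvable for $s=1/p$. Given $\mathbf{u}\in L^p(\Omega,\mathbb{R}^d)$, I would set $g:=\mathbf{u}\cdot n\in B^p_{-1/p}(\partial\Omega)$ (the normal trace is well-defined on $L^p$ vector fields, as noted in Section 6), define $g_0 = g - c$ to enforce compatibility, solve (\ref{B-Neumann-problem}) with this datum to obtain $\phi\in W^{1,p}(\Omega)$, and put $\mathbf{v}:=\mathbf{u}-\nabla\phi$. The weak formulation forces $\int_\Omega \mathbf{v}\cdot\nabla\psi\,dx = \int_\Omega\mathbf{u}\cdot\nabla\psi\,dx - \langle g, Tr(\psi)\rangle = 0$ for all $\psi\in W^{1,q}(\Omega)$, so $\mathbf{v}\in L^p_\sigma(\Omega)$, giving the decomposition; the estimate transfers directly from the solvability bound, and uniqueness of the two components follows from the uniqueness (up to constants) of the solution $\phi$. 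The main obstacle I anticipate is the careful bookkeeping at the level of the normal trace and the duality pairings: one must verify that the normal trace of a general $L^p$ field lands in $B^p_{-1/p}(\partial\Omega)$ with the correct duality against $B^q_{1/p}(\partial\Omega)$, and that the integration-by-parts identities connecting $\int_\Omega \mathbf{u}\cdot\nabla\psi\,dx$ with $\langle \mathbf{u}\cdot n, Tr(\psi)\rangle$ hold in the distributional sense for non-smooth fields. Making this precise—rather than the algebra of the splitting itself—is where the argument needs the most attention.
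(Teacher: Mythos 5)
The direction ``Neumann solvability $\Rightarrow$ Helmholtz decomposition'' has a genuine gap. You set $g:=\mathbf{u}\cdot n$ for an \emph{arbitrary} $\mathbf{u}\in L^p(\Omega,\mathbb{R}^d)$, citing Section 6; but Section 6 only assigns a normal trace to fields in $L^p_\sigma(\Omega)$, i.e.\ to divergence-free fields. For a general $L^p$ field the normal trace is not defined: the only way to make sense of $\langle \mathbf{u}\cdot n, Tr(\psi)\rangle$ is through the Gauss--Green identity
\begin{equation*}
\langle \mathbf{u}\cdot n, Tr(\psi)\rangle=\int_\Omega \mathbf{u}\cdot\nabla\psi\,dx+\int_\Omega \text{div}(\mathbf{u})\,\psi\,dx,
\end{equation*}
and for $\mathbf{u}\in L^p$ one only has $\text{div}(\mathbf{u})\in W^{-1,p}(\Omega)$, so the right-hand side depends on the extension $\psi$ and not merely on $Tr(\psi)$. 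Concretely, your identity $\int_\Omega\mathbf{v}\cdot\nabla\psi\,dx=\int_\Omega\mathbf{u}\cdot\nabla\psi\,dx-\langle g,Tr(\psi)\rangle=0$ silently assumes $\text{div}(\mathbf{u})=0$; without that, the $\mathbf{v}$ you produce is not divergence-free and the argument collapses. You flag this as a point ``needing attention,'' but it cannot be fixed by bookkeeping --- one must first remove the divergence. That is exactly the step the paper inserts: it sets $\phi(x)=\frac{\partial}{\partial x_i}\int_\Omega\Gamma(x-y)u_i(y)\,dy$, so that $\|\nabla\phi\|_{L^p}\le C\|\mathbf{u}\|_{L^p}$ by Calder\'on--Zygmund and $\mathbf{u}-\nabla\phi$ is divergence-free; only then is $\Lambda=(\mathbf{u}-\nabla\phi)\cdot n\in B^p_{-1/p}(\partial\Omega)$ meaningful (and automatically compatible, so your correction $g-c$ is unnecessary), and solving the Neumann problem with data $\Lambda$ finishes as you describe.

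Your forward direction (``Helmholtz $\Rightarrow$ Neumann'') takes a genuinely different route from the paper's and is essentially sound. The paper argues by duality: it takes the $L^2$ solution for $L^\infty$ boundary data, tests $\nabla\psi$ against $\mathbf{u}\in L^q\cap L^2$, uses the Helmholtz decomposition for the exponents $q$ and $2$ to obtain the a priori bound $\|\nabla\psi\|_{L^p}\le C\|\frac{\partial\psi}{\partial n}\|_{B^p_{-1/p}(\partial\Omega)}$, and concludes by density of $L^\infty(\partial\Omega)$ in $B^p_{-1/p}(\partial\Omega)$. You instead lift $g$ to a divergence-free field $\mathbf{u}\in L^p$ with $\int_\Omega\mathbf{u}\cdot\nabla\psi\,dx=\langle g,Tr(\psi)\rangle$ and project onto $\text{grad } W^{1,p}(\Omega)$; this is cleaner in that it avoids the limiting argument, but the lifting needs to be made explicit: the functional $\nabla\psi\mapsto\langle g,Tr(\psi)\rangle$ is well defined on the closed subspace $\text{grad } W^{1,q}(\Omega)$ of $L^q(\Omega,\mathbb{R}^d)$ (using $\langle g,1\rangle=0$ and Poincar\'e) and bounded by $C\|g\|_{B^p_{-1/p}(\partial\Omega)}\|\nabla\psi\|_{L^q}$, so Hahn--Banach and $L^q$--$L^p$ duality produce $\mathbf{u}$. ``The trace operator is bounded and onto'' is not by itself a construction of such a field, so this step should be written out.
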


\begin{proof}
Suppose that (\ref{B-Neumann-problem}) is uniquely solvable for $s=1/p$.
The uniqueness of the solutions to (\ref{B-Neumann-problem}) 
implies that $L^p_\sigma (\Omega)\cap \text{grad } W^{1,p}(\Omega)=\{ 0\}$.
Given any $\mathbf{u}=(u_1, \dots, u_d)\in L^p(\Omega, \mathbb{R}^d)$, let
\begin{equation}
\phi (x)=\frac{\partial}{\partial x_i}
\int_\Omega \Gamma (x-y) u_i (y)\, dy,
\end{equation}
where $\Gamma(x)$ denotes the fundamental solution for $\Delta$ in $\mathbb{R}^d$
with pole at the origin.
By the Calder\'on-Zygmund estimate, $\phi\in W^{1,p}(\Omega)$ and $\| \nabla \phi\|_{L^p(\Omega)}
\le C\| \mathbf{u}\|_{L^p(\Omega)}$. Since
$\text{div}(\mathbf{u}-\nabla \phi)=0$ in $\Omega$,
it follows that $\Lambda =(\mathbf{u}-\nabla \phi)\cdot n \in B^p_{-1/p}(\partial\Omega)$
and $\|\Lambda\|_{B^p_{-1/p}(\partial\Omega)} \le C \| \mathbf{u}-\nabla \phi\|_{L^p(\Omega)}$,
where $\Lambda$ may be defined by
$$
<\Lambda, \varphi>=\int_\Omega (\mathbf{u}-\nabla \phi)\cdot \nabla \widetilde{\varphi}\, dx
$$
for $\varphi \in B^q_{1/p}(\partial\Omega)$ and $\widetilde{\varphi}\in W^{1,q}(\Omega)$
such that $Tr(\widetilde{\varphi})=\varphi$ on $\partial\Omega$.
We now let $\mathbf{v}=\mathbf{u}- \nabla (\phi+\psi)\in L^p(\Omega, \mathbb{R}^d)$, where
$\psi\in W^{1,p}(\Omega)$ is a solution to (\ref{B-Neumann-problem}) with boundary data
$\Lambda$.
Observe that for any $\varphi \in C^\infty (\mathbb{R}^d)$,
$$
\int_\Omega \mathbf{v}\cdot \nabla \varphi \, dx
=<\Lambda,\varphi>-\int_\Omega \nabla \psi\cdot \nabla \varphi=0.
$$
Thus $\mathbf{v}\in L^p_\sigma(\Omega)$. Also note that
$$
\aligned
\| \nabla (\phi +\psi)\|_{L^p(\Omega)}
 & \le C\,  \big\{ \| \nabla \phi \|_{L^p(\Omega)}
+\| \Lambda \|_{B^p_{-1/p}(\partial\Omega)}\big\}\\
 & \le C\,  \big\{ \| \nabla \phi \|_{L^p(\Omega)}
+\| \mathbf{u} -\nabla \phi\|_{L^p(\Omega)}\big\}\\
& \le C \, \| \mathbf{u}\|_{L^p(\Omega)}.
\endaligned
$$
Since $\mathbf{u}=\mathbf{v}+\nabla (\phi +\psi)$, 
we obtain the Helmholtz decomposition (\ref{H-decomposition-1}).

Next suppose that the $L^p$-Helmholtz decomposition (\ref{H-decomposition}) with
estimate (\ref{H-estimate}) holds.
The uniqueness for the Neumann problem (\ref{B-Neumann-problem}) follows
from the fact that $L^p_\sigma (\Omega)\cap \text{grad } W^{1,p}(\Omega)$
$=\{ 0 \}$.
To show the existence, let $\psi$ be a solution of the $L^2$ Neumann problem in $\Omega$
with boundary data $\frac{\partial \psi}{\partial n}=g$, 
where $g\in L^\infty(\partial\Omega)$ and $\int_{\partial\Omega} g
d\sigma =0$.
Given $\mathbf{u}\in L^q (\Omega, \mathbb{R}^d)\cap L^2 (\Omega, \mathbb{R}^d)$,
write $\mathbf{u}=\mathbf{v} +\nabla \phi$, where
$\mathbf{v}\in L^q_\sigma (\Omega)\cap L^2_\sigma (\Omega)$
and $\phi \in W^{1,q}(\Omega)\cap W^{1,2}(\Omega)$.
This is possible since the Helmholtz decomposition holds for exponents $q$ and $2$.
It follows that
$$
\aligned
\left| \int_\Omega \nabla \psi \cdot \mathbf{u}\, dx\right|
& =\left| \int_\Omega \nabla \psi \cdot \nabla \phi \, dx\right|
=\left| \int_{\partial\Omega}
\frac{\partial \psi}{\partial n}
\cdot (\phi-\alpha)\, d\sigma\right| \\
&\le \|\frac{\partial \psi }{\partial n}\|_{B^p_{-1/p}(\partial\Omega)}
\| \phi-\alpha\|_{B^q_{1/p}(\partial\Omega)}\\
& \le \|\frac{\partial \psi }{\partial n}\|_{B^p_{-1/p}(\partial\Omega)}
\| \phi-\alpha \|_{W^{1,q}(\Omega)},
\endaligned
$$
for any $\alpha \in \mathbb{R}$. By Poincar\'e inequality, this yields that
$$
\left| \int_\Omega \nabla \psi \cdot \mathbf{u}\, dx\right|
\le C \|\frac{\partial \psi }{\partial n}\|_{B^p_{-1/p}(\partial\Omega)}
\|\nabla \phi\|_{L^q(\Omega)}.
$$
Using $\|\nabla \phi \|_{L^q(\Omega)}
\le C_q \|\mathbf{u}\|_{L^q(\Omega)}$, we then obtain 
\begin{equation}\label{density-estimate}
\|\nabla \psi\|_{L^p(\Omega)} \le C 
\|\frac{\partial \psi }{\partial n}\|_{B^p_{-1/p}(\partial\Omega)}
\end{equation}
by duality.
Since $L^\infty(\partial\Omega)$ is dense in $B^p_{-1/p}(\partial\Omega)$,
the existence of solutions to (\ref{B-Neumann-problem}) with data $\Lambda$, where
$\Lambda\in B^p_{-1/p}(\partial\Omega)$ and $<\Lambda, 1>=0$,
follows from the estimate (\ref{density-estimate}) by a simple limiting argument.
This completes the proof.
\end{proof}

Lemma \ref{lemma-6.2} and Theorem \ref{equiv-theorem} lead to the following.

\begin{thm}\label{equiv-theorem-1}
Let $\Omega$ be a bounded Lipschitz domain in $\mathbb{R}^d$ and $1<p<\infty$.
Then the solvability of
(\ref{B-Neumann-problem}) for $s=1/p$ is equivalent to the solvability
of (\ref{B-Neumann-problem}) for $s=1/q$, where $q=\frac{p}{p-1}$.
\end{thm}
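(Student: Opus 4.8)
The plan is to obtain the statement as a purely formal consequence of Lemma \ref{lemma-6.2} and Theorem \ref{equiv-theorem}, by chaining together the equivalences they already supply. First I would invoke Theorem \ref{equiv-theorem} at the exponent $p$: it identifies the unique solvability of (\ref{B-Neumann-problem}) for $s=1/p$ with the validity of the $L^p$-Helmholtz decomposition (\ref{H-decomposition}) together with the estimate (\ref{H-estimate}). This converts the left-hand side of the desired equivalence into a statement about decompositions.

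Next I would use Lemma \ref{lemma-6.2} to transfer the Helmholtz decomposition between the exponent $p$ and its dual exponent $q=\frac{p}{p-1}$. As stated, the lemma gives the implication that the $L^p$-decomposition with constant $C_p$ forces the $L^q$-decomposition with constant $C_q=C_p$. Since $p$ is itself the dual exponent of $q$, applying the very same lemma with the roles of $p$ and $q$ interchanged yields the converse implication; hence the $L^p$-Helmholtz decomposition holds \emph{if and only if} the $L^q$-Helmholtz decomposition holds. This symmetry is the only point that warrants an explicit word of justification, and it requires nothing beyond reading Lemma \ref{lemma-6.2} in both directions.

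Finally I would apply Theorem \ref{equiv-theorem} once more, now at the exponent $q$, to identify the $L^q$-Helmholtz decomposition with the unique solvability of (\ref{B-Neumann-problem}) for $s=1/q$. Concatenating the three equivalences shows that solvability for $s=1/p$ is equivalent to solvability for $s=1/q$, which is the assertion. I do not anticipate any genuine obstacle: all the analytic content has already been absorbed into Lemma \ref{lemma-6.2} and Theorem \ref{equiv-theorem}, so the present statement is essentially a corollary obtained by bookkeeping. The only care needed is to note that both results are stated for an arbitrary bounded Lipschitz domain and for the full range $1<p<\infty$, which is precisely the generality in which the two dual problems $s=1/p$ and $s=1/q$ live, so no restriction on $p$ or on $\Omega$ is introduced in the chain.
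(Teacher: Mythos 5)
Your proposal is correct and is exactly the paper's argument: the paper derives Theorem \ref{equiv-theorem-1} directly by combining Lemma \ref{lemma-6.2} (applied in both directions, as you note) with Theorem \ref{equiv-theorem} at the exponents $p$ and $q$. No further comment is needed.
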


\begin{remark}\label{Remark-6.3}
{\rm
We show in Section 5 that if $\Omega$ is a bounded convex domain in $\mathbb{R}^d$,
then the Neumann problem (\ref{B-Neumann-problem}) is solvable for $s=1/p$ and $2<p<\infty$.
Thus, by Theorem \ref{equiv-theorem-1}, the Neumann problem (\ref{B-Neumann-problem})
in convex domains with $s=1/p$ is solvable for any $1<p<\infty$.
This completes the proof of Theorem \ref{cor-1}.
}
\end{remark}

\begin{remark}\label{Remark-6.4}
{\rm
Theorem \ref{cor-2} follows readily from Theorems \ref{cor-1} and \ref{equiv-theorem}.
}
\end{remark}

\bibliography{gs}

\small
\noindent\textsc{Department of Mathematics, University of Kentucky, Lexington, KY 40506}\\
\emph{E-mail address}: \texttt{jgeng@ms.uky.edu} \\

\noindent\textsc{Department of Mathematics, 
University of Kentucky, Lexington, KY 40506}\\
\emph{E-mail address}: \texttt{zshen2@email.uky.edu} \\

\noindent \today
\end{document}